\title{Quasiconformal deformation of the chordal Loewner driving function and first variation of the Loewner energy}
\date{March 5, 2024}
\newcolumntype{P}[1]{>{\centering\arraybackslash}p{#1}}
\setlist[enumerate]{topsep = 1ex, leftmargin=1cm, itemsep= -2pt}
\setlist[itemize]{topsep = 1ex, leftmargin=1cm, itemsep= -2pt}
\let\OLDthebibliography\thebibliography
\renewcommand\thebibliography[1]{
  \OLDthebibliography{#1}
  \setlength{\parskip}{1pt}
  \setlength{\itemsep}{2pt}
}
\newtheorem{thm}{Theorem}[section]
\newtheorem{cor}[thm]{Corollary}
\newtheorem{lem}[thm]{Lemma}
\newtheorem{prop}[thm]{Proposition}
\theoremstyle{definition} 
\newtheorem{df}[thm]{Definition}
\newtheorem{remark}[thm]{Remark}
\newcommand{\inversion}{\iota}
\newcommand{\emu}{{\vare \mu}}
\newcommand{\enu}{{\vare \nu}}
\newcommand{\Schwarzian}{\mathcal{S}}
\newcommand{\Nonlinearity}{\mathcal{N}}
\numberwithin{equation}{section}
\global\long\def\ii{\mathfrak{i}}
\newcommand{\abs}[1]{\left\lvert #1 \right \rvert}
\newcommand{\norm}[1]{\lVert #1 \rVert}
\newcommand{\mc}[1]{\mathcal{#1}}
\newcommand{\m}[1]{\mathbb{#1}}
\def\ie{i.e., }
\renewcommand\Re{\operatorname{Re}}
\renewcommand\Im{\operatorname{Im}}
\def\SLE{\operatorname{SLE}}
\def\g{\gamma}
\def\l{\lambda}
\def\o{\omega}
\def\O{\Omega}
\def\vare{\varepsilon}
\def\Chat{\hat{\m{C}}}
\def\dd{\mathrm{d}}
\def\1{\mathbf{1}}
 \def\domain{D}
\author{Jinwoo Sung\thanks{\protect\url{jsung@math.uchicago.edu} University of Chicago, Chicago, IL, USA },\qquad Yilin Wang\thanks{\protect\url{yilin@ihes.fr} Institut des Hautes \'Etudes Scientifiques, Bures-sur-Yvette, France
}}
\begin{document}

\maketitle
\begin{abstract}
We derive the variational formula of the Loewner driving function of a simple chord under infinitesimal quasiconformal deformations with Beltrami coefficients supported away from the chord. 
As an application, we obtain the first variation of the Loewner energy of a Jordan curve, defined as the Dirichlet energy of its driving function.
This result gives another explanation of the identity between the Loewner energy and the universal Liouville action introduced by Takhtajan and Teo, which has the same variational formula. We also deduce the variation of the total mass of SLE$_{8/3}$ loops touching the Jordan curve under quasiconformal deformations.
\end{abstract}

{\small{\bf Mathematics Subject Classification:} 30C55, 30C62 (Primary); 30F60, 60J67 (Secondary)}


\bigskip

{\bf Acknowledgments:}
We thank Gregory Lawler, Catherine Wolfram, and the anonymous referee for useful suggestions. Part of the work on this paper was carried out during visits by J.S.\ to l'Institut des Hautes \'Etudes Scientifiques, whose hospitality is gratefully acknowledged. J.S. was partially supported by a Kwanjeong Educational Foundation Fellowship. 
Y.W. is funded by the European Union (ERC, RaConTeich, 101116694). Views and opinions expressed are however those of the author(s) only and do not necessarily reflect those of the European Union or the European Research Council Executive Agency. Neither the European Union nor the granting authority can be held responsible for them.

\bigskip

\section{Introduction} 

   One hundred and one years ago, Loewner introduced \cite{Loewner1923} a method to encode a simple planar curve by a family of uniformizing maps (called the Loewner chain) which satisfies a differential equation driven by a real-valued function.
 This method has become a powerful tool in geometric function theory. It was instrumental in the proof of Bieberbach conjecture by De Branges \cite{DeBranges1985} (which was also the original motivation of Loewner) and was revived around 2000 as a fundamental building block in the definition of the Schramm--Loewner Evolution \cite{schramm2000scaling}.
   On the other hand, quasiconformal mapping is one of the fundamental concepts in geometric function theory and Teichm\"uller theory. Thus, we find it natural to investigate the interplay between quasiconformal maps and the Loewner transform.   We will further comment on the motivation of this work and discuss follow-up questions in Section~\ref{sec:comments}.
   We mention that analytic properties of the Loewner driving function have been investigated in, e.g., \cite{Rohde_Schramm,Marshall_Rohde,Lind_sharp, Lind_Tran,FrizShekhar15,RW}.

Our first result shows how quasiconformal deformations of the ambient domain  $\m H = \{z \in \mathbb C \colon \Im (z) > 0\}$ affect the driving function of a simple chord in $\m H$ connecting $0$ to $\infty$.

\begin{thm}\label{thm:intro_driving_function} Let $\eta$ be a simple chord in $(\m H; 0, \infty)$ under capacity parametrization and $\nu \in L^\infty (\m H)$ be an infinitesimal Beltrami differential whose support is compact and disjoint from $\eta$. For $\vare \in \m R$ such that $\norm{\vare\nu}_{\infty} <1$, let $\psi^\enu$ be the unique quasiconformal self-map of $\m H$ with Beltrami coefficient $\enu$ such that $\psi^\enu(0) = 0$ and $\psi^\enu(z) - z = O(1)$ as $z\to\infty$. Denote the capacity and driving functions of the parametrized chord $\psi^{\enu} \circ \eta$ in $(\m{H},0,\infty)$ by $a_\cdot^\enu$
    and $\lambda_\cdot^\enu$, respectively. Then, 
    \begin{equation}
        \frac{\partial \lambda_t^\enu}{\partial \vare} \bigg|_{\vare=0} = - \frac{2}{\pi} \,\mathrm{Re}\,\int_{\m{H}} \nu(z) \bigg(\frac{g_t'(z)^2}{g_t(z)-\lambda_t} - \frac{1}{z}\bigg) \dd^2 z 
    \end{equation}
    and 
    \begin{equation}
        \frac{\partial a_t^\enu}{\partial \vare}\bigg|_{\vare=0} = \frac{1}{\pi} \,\mathrm{Re}\int_{\m{H}} \nu(z) \big(g_t'(z)^2 - 1\big) \dd^2 z 
    \end{equation}
    where $\dd^2 z$ is the Euclidean area measure, $\lambda_\cdot$ is the driving function of $\eta$, $g_\cdot$ is the Loewner chain of $\eta$.
\end{thm}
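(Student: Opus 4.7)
The plan is to introduce the comparison quasiconformal self-map
$$H_t^\enu := g_t^\enu \circ \psi^\enu \circ g_t^{-1} : \m H \to \m H,$$
where $g_t^\enu$ is the hydrodynamically normalized uniformizing map of $\m H \setminus \psi^\enu(\eta[0,t])$, so in particular $g_t^\enu(\psi^\enu(\eta(t))) = \lambda_t^\enu$. Chasing the tip of the chord through the definition gives the key identity $H_t^\enu(\lambda_t) = \lambda_t^\enu$, and the chain rule identifies the Beltrami coefficient of $H_t^\enu$ as $\vare \tilde\nu_t$, with $\tilde\nu_t(w) := \nu(g_t^{-1}(w))\,\overline{(g_t^{-1})'(w)}/(g_t^{-1})'(w)$. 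Composing the hydrodynamic expansions $g_t^\enu(z) = z + 2a_t^\enu/z + O(z^{-2})$ and $g_t^{-1}(w) = w - 2t/w + O(w^{-2})$ with $\psi^\enu(z) = z + c(\vare) + \beta(\vare)/z + O(z^{-2})$ at $\infty$ gives
$$H_t^\enu(w) = w + c(\vare) + \frac{2(a_t^\enu - t) + \beta(\vare)}{w} + O(w^{-2}).$$

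Setting $F_t := \partial_\vare H_t^\enu|_{\vare=0}$, Ahlfors--Bers theory gives $\bar\partial F_t = \tilde\nu_t$ on $\m H$, and $F_t$ is real on $\m R$ since $H_t^\enu$ is an $\m H$-self-map. Extending $\tilde\nu_t$ by Schwarz reflection and inverting $\bar\partial$ by the Cauchy kernel, subject to $F_t(z) \to \dot c$ as $z \to \infty$ (with $\dot c := \partial_\vare c(\vare)|_0$), yields
$$F_t(z) = \frac{1}{\pi}\int_{\m C} \frac{\tilde\nu_t^{\mathrm{ext}}(w)}{z - w}\,\dd^2 w + \dot c.$$
The same recipe applied to $\dot\psi := \partial_\vare \psi^\enu|_0$ (normalized by $\dot\psi(0) = 0$) gives the explicit values $\dot c = \frac{1}{\pi}\int_{\m C}\nu^{\mathrm{ext}}(z)/z\,\dd^2 z$ and $\dot\beta = \frac{1}{\pi}\int_{\m C}\nu^{\mathrm{ext}}\,\dd^2 w$.

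Since $\nu$ vanishes in a neighborhood of $\eta$, so does $\tilde\nu_t$ near $\lambda_t$, so $F_t$ is holomorphic there and the tip correspondence linearizes to $\dot\lambda_t = F_t(\lambda_t)$. Matching the asymptotic expansion of $F_t$ at $\infty$ with the one above yields $2\dot a_t + \dot\beta = \frac{1}{\pi}\int_{\m C}\tilde\nu_t^{\mathrm{ext}}\,\dd^2 w$. The change of variables $w = g_t(z)$, under which $\dd^2 w = |g_t'(z)|^2\,\dd^2 z$ and $\tilde\nu_t(g_t(z)) = \nu(z)\,g_t'(z)/\overline{g_t'(z)}$, converts $\tilde\nu_t(w)\,\dd^2 w$ into $\nu(z)\,g_t'(z)^2\,\dd^2 z$. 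Collapsing each $\m C$-integral to twice the real part of an $\m H$-integral via the reflection symmetry then converts the two relations for $\dot\lambda_t$ and $\dot a_t$ directly into the formulas stated in the theorem: the $-1$ in the $\dot a_t$ formula is the subtracted $\dot\beta$, and the $-1/z$ in the $\dot\lambda_t$ formula is the added $\dot c$.

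The main obstacle is the careful bookkeeping of the constants $c(\vare), \beta(\vare)$ introduced by the non-hydrodynamic normalization $\psi^\enu(0) = 0$, $\psi^\enu(z) - z = O(1)$ of $\psi^\enu$, which are precisely what produces the two correction terms. The remaining analytic ingredients---differentiability in $\vare$, holomorphy of $F_t$ near $\lambda_t$, and validity of the change of variables---follow from Ahlfors--Bers theory together with the fact that $\operatorname{supp}(\nu)$ is bounded away from $\eta$.
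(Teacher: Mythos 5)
Your proposal is correct, and it reaches both formulas by a route that is genuinely different from the paper's, even though the underlying comparison map is essentially the same object. The paper conjugates everything by the inversion $\inversion(z)=-1/z$ so that $\infty$ becomes $0$, identifies $\lambda_t=-\tfrac12\Nonlinearity\tilde f_t(0)$ and $a_t=-\tfrac1{12}\Schwarzian\tilde f_t(0)$ from the Taylor coefficients at the origin, and then combines the Ahlfors--Bers kernel at $0$ with the chain rules for $\Nonlinearity$ and $\Schwarzian$ applied to $\tilde f_t^\enu=\tilde\psi^{\vare\tilde\nu_t}\circ\tilde f_t\circ(\tilde\psi^{\vare\tilde\nu_0})^{-1}$; your $H_t^\enu=g_t^\enu\circ\psi^\enu\circ g_t^{-1}$ is (up to centering) the conjugate $\inversion\circ\tilde\psi^{\vare\tilde\nu_t}\circ\inversion$ of the paper's intermediate map. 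Where you diverge is in how the two quantities are read off: you extract $\partial_\vare\lambda_t^\enu$ by evaluating $F_t=\partial_\vare H_t^\enu|_{\vare=0}$ at the tip via the identity $H_t^\enu(\lambda_t)=\lambda_t^\enu$ (a zeroth-order boundary evaluation, legitimate because $\tilde\nu_t$ vanishes near $\lambda_t$ so the reflected map is conformal there), and $\partial_\vare a_t^\enu$ from the $1/w$ coefficient of the hydrodynamic expansion at $\infty$, with the normalization constants $\dot c$ and $\dot\beta$ of $\psi^\enu$ supplying exactly the $-1/z$ and $-1$ correction terms. This makes the origin of those corrections more transparent (in the paper they arise as the subtracted $t=0$ terms $\Nonlinearity\tilde\psi^{\vare\tilde\nu_0}(0)$ and $\Schwarzian\tilde\psi^{\vare\tilde\nu_0}(0)$) and avoids the pre-Schwarzian/Schwarzian bookkeeping altogether; what the paper's formulation buys in exchange is that the Schwarzian identity $a_t^\enu=-\tfrac1{12}\Schwarzian\tilde f_t^\enu(0)$ is precisely the structure that is reused in Section~3 to pass to the loop driving function and Theorem~\ref{thm:intro_energy-derivative-schwarzian}. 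Your computations of the Beltrami coefficient $\tilde\nu_t$, the change of variables $\tilde\nu_t(w)\,\dd^2w=\nu(z)g_t'(z)^2\,\dd^2z$, the reflection collapsing the $\m C$-integrals to $2\,\mathrm{Re}$ of $\m H$-integrals, and the values of $\dot c$ and $\dot\beta$ all check out, so the only remaining work in a full write-up is the standard Ahlfors--Bers justification you already flag.
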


Our proof relies on the simple but crucial observation that the Loewner driving function and the capacity parametrization of the curve can be expressed by the pre-Schwarzian and Schwarzian derivatives, respectively, of well-chosen maps (Lemma~\ref{lem:preS_driving}). 
   
   \bigskip 

   We extend our considerations to the Loewner driving function associated with a Jordan curve $\gamma \subset \Chat = \m C \cup \{\infty\}$, now defined on $\m R$ instead of $\m R_+$.  The loop driving function was defined in \cite{W2} and can be thought of as a consistent family of chordal Loewner driving functions. 
   See Section~\ref{sec:driving} for the precise definition. 
  We point out that for a given Jordan curve, there are a few choices we make to define its driving function $t\mapsto \l_t$:
  \begin{itemize}
      \item  the orientation of $\gamma$; 
      \item  a point on $\gamma$  called the \emph{root}, which we denote by $\g (-\infty) = \g (+\infty)$ (we also use $\g (\pm\infty)$ when we do not emphasize the difference between the start point and the end point of the parametrization);
      \item  another point on $\g$, which we call $\g (0)$;
      \item a conformal map $H_0 : \Chat \smallsetminus \g [-\infty, 0] \to \m C \smallsetminus \m R_+$, such that $H_0 (\g (0)) = 0$ and $H_0 (\g (+\infty)) = \infty$, where  $\g [-\infty, 0]$ denotes the closed subinterval of $\g$ (as a set) going from the root to $\g (0)$ following the orientation of the curve.
  \end{itemize}
Then, we can complete the continuous parametrization of $\g$ on $(-\infty,0) \cup (0,+\infty)$ in a unique way
such that for each $s \in \m{R}$, the chord $\gamma(\cdot+s)$ traverses the simply connected domain $\Chat \smallsetminus \gamma[-\infty,s]$ in capacity parametrization.\footnote{For the simplicity of notation, we assumed here that the capacity parametrization of $\g$ is bi-infinite. See Footnote~\ref{footnote:capacity} for further details regarding this assumption.} Moreover, the chordal driving function of  $\gamma(\cdot+s)$  in $\Chat \smallsetminus \gamma[-\infty,s]$ is given by $\l_{\cdot + s} - \l_{s}$ (see Lemma~\ref{lem:loop-time-change}). 

If the orientation and the root of $\g$ are fixed, different choices of $\g (0)$ and $H_0$ result in changes to the driving function of the form 
    \begin{equation} \label{eq:driving-function-scaling}
        \tilde \lambda_t =  c \left(\lambda_{c^{-2} (t + s)} - \lambda_{c^{-2} s}\right)
    \end{equation}
  for some $c > 0$ and $s \in \m R$. Such transformations do not change the Dirichlet energy of $\lambda$. 
    Rather surprisingly, the Dirichlet energy of the loop driving function does not depend on the choice of the root or the orientation either, as shown in \cite{W1,RW}. These symmetries are further explained by the following theorem.
  \begin{thm}[See \cite{W2}] \label{thm:intro_equiv}
      The Loewner energy of $\g$, defined as 
   \begin{equation} \label{eq:Loewner_energy_df}
       I^L(\gamma) = \frac{1}{2} \int_{-\infty}^{+\infty} |\dot \lambda_t|^2 \,\dd t 
   \end{equation} 
    equals $1/\pi$ times the \emph{universal Liouville action} ${\bf S}$ introduced by Takhtajan and Teo in \textnormal{\cite{TT06}}, defined as
\begin{equation}\label{eq_def_S1}
{\bf S} (\g) : = \int_{\m D} \abs{\frac{f''}{f'}(z)}^2 \dd^2 z + \int_{\m D^*}\abs{\frac{g''}{g'}(z)}^2 \dd^2 z + 4\pi \log \abs{\frac{f'(0)}{g'(\infty)}}.
\end{equation}
Here, $f : \m D \to \O$ and $g : \m D^* \to \O^*$ are conformal maps such that $g (\infty) = \infty$,  $\O$ and $\O^*$ are  respectively the bounded and unbounded connected components of $\m C \smallsetminus \g$, and $g'(\infty) = \lim_{z\to \infty} g'(z)$. If $\g$ passes through $\infty$, we replace $\g$ by $A(\g)$ where $A$ is any M\"obius transformation of $\Chat$ sending $\g$ to a bounded curve.
  \end{thm}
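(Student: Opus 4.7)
The natural strategy is to verify the identity via the first variation under infinitesimal quasiconformal deformations of $\Chat$. The space of finite-energy Jordan curves (the Weil--Petersson class) is path-connected to the round circle through smooth deformations generated by Beltrami differentials compactly supported in $\Chat \smallsetminus \g$. Since both $I^L$ and $\mathbf{S}/\pi$ vanish on the unit circle, it suffices to show that their first variations along such deformations coincide and then integrate along the path.

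For the left-hand side, I would extend Theorem~\ref{thm:intro_driving_function} to the loop driving function, giving a formula for $\partial \dot\lambda_t^\enu / \partial \vare |_{\vare = 0}$ in terms of the Loewner maps of $\g$. Substituting into $\frac{1}{2}\int |\dot\lambda_t|^2 \,\mathrm{d}t$, differentiating under the integral, and swapping the order of integration produces an expression of the shape
\begin{equation*}
\frac{\partial}{\partial \vare} I^L(\g^\enu) \bigg|_{\vare = 0} = \mathrm{Re}\int_{\Chat} \nu(z)\,\Phi_\g(z)\,\mathrm{d}^2 z
\end{equation*}
with an integrand $\Phi_\g$ built explicitly from the uniformizing maps of $\g$ and the driving function.

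For the right-hand side, I would differentiate $\mathbf{S}(\g^\enu)$ directly. The Ahlfors--Bers formula expresses the first variation of the conformal maps $f^\enu$ and $g^\enu$ as explicit singular integral transforms of $\nu$, so the three terms in \eqref{eq_def_S1} can be differentiated term by term. This kind of computation is essentially carried out by Takhtajan and Teo in their study of $\mathbf{S}$ and reassembles to a functional of the same form as above against $\nu$.

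The main step---and the chief obstacle---is matching the two integrands. After suitable integration by parts, both should reduce to the pairing of $\nu$ with a holomorphic quadratic differential on $\Chat \smallsetminus \g$ canonically attached to the infinitesimal deformation theory of $\g$, up to the overall factor $1/\pi$; any antiholomorphic discrepancy is orthogonal to every admissible $\nu$ and so is ruled out. Once the first variations agree, integrating along a smooth quasiconformal isotopy from $\g$ to $\mathbb{S}^1$ and using that both functionals vanish on the circle concludes the proof.
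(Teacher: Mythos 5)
This theorem is not proved in the paper: it is quoted from \cite{W2}, and the paper's own contribution (Theorem~\ref{thm:intro_energy-derivative-schwarzian}) is explicitly offered only as ``another explanation'' of the identity $I^L = \mathbf{S}/\pi$ --- which is precisely the strategy you outline. So you have correctly identified the idea the paper is organized around, but the authors deliberately stop short of claiming it as a proof, and your write-up does not close the gaps that prevent it from being one. The original argument in \cite{W2} is of a completely different, non-variational nature (it relates the chordal Loewner energy to Dirichlet-type integrals of the uniformizing maps and then passes to the loop setting by a root-invariance argument), so your route is genuinely different from the cited proof as well.

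The concrete gaps are these. First, the connectivity step is asserted, not proved: you need a path in the Weil--Petersson class from $\gamma$ to the circle along which, \emph{at every time}, the infinitesimal deformation is a Beltrami differential compactly supported away from the \emph{current} curve, since that is the only class for which Theorem~\ref{thm:intro_driving_function} and Theorem~\ref{thm:intro_energy-derivative-schwarzian} apply. The obvious path $\vare \mapsto \omega^{\vare\mu}(\mathbb{S}^1)$ does not have this property for intermediate $\vare$, and no argument is given that such paths exist and exhaust the WP class. Second, knowing the Gateaux derivative at $\vare=0$ for each fixed admissible $\mu$ is not enough to integrate along a path; you need joint $C^1$ regularity in the path parameter, which neither the paper nor your proposal establishes. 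Third, the comparison of the two first-variation formulas is itself delicate: Takhtajan--Teo compute the variation of $\mathbf{S}$ for harmonic Beltrami differentials supported on an entire complementary component $\O$ or $\O^*$ (up to the boundary), whereas the formula here holds for $\mu$ compactly supported away from $\gamma$; these classes do not coincide, so ``the integrands match'' requires extending one of the two formulas (this is exactly the point of the paper's Remark~1.5). Your appeal to ``any antiholomorphic discrepancy is orthogonal to every admissible $\nu$'' does not substitute for this: the space of admissible $\nu$ depends on which variational formula you are using, and orthogonality to a restricted class does not force the discrepancy to vanish.
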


\begin{remark}
    Although it may not be so apparent from \eqref{eq_def_S1}, it will follow immediately from the definition of the loop driving function and \eqref{eq:Loewner_energy_df} that $I^L$ is invariant under M\"obius transformations of $\Chat$. See Remark~\ref{rem:mobius_inv_dr}.
    A Jordan curve for which ${\bf S}$ is finite is called a \emph{Weil--Petersson quasicircle}.
\end{remark}

Using Theorem~\ref{thm:intro_driving_function}, we obtain in Section~\ref{sec:WP} the following first variation formula of the Loewner energy. This formula coincides with that of the universal Liouville action ${\bf S}$ in \cite[Ch.\,2, Thm.\,3.8]{TT06} divided by $\pi$, thus giving another explanation of the identity $I^L = {\bf S}/\pi$.  This variational formula was crucial in \cite{TT06} to show that $\bf S$ is a K\"ahler potential of the Weil--Petersson Teichm\"uller space.

\begin{thm}\label{thm:intro_energy-derivative-schwarzian}
Let $\mu\in L^\infty(\m{C})$ be an infinitesimal Beltrami differential with compact support in $\Chat \smallsetminus\gamma$. For $\vare \in \m{R}$ with $\|\vare \mu\|_{\infty} < 1$, let $\omega^\emu: \Chat \to \Chat$ be any quasiconformal mapping with Beltrami coefficient $\emu$.
Let $\g^{\vare \mu} = \omega^{\vare \mu} (\g)$. Then,
\begin{equation} 
    \frac{\dd}{\dd \vare} \bigg|_{\vare=0} I^L(\gamma^\emu)= -\frac{4}{\pi} \, \mathrm{Re} \left[ \int_{\O} \mu(z) \mc S [f^{-1}](z) \,\dd^2 z+ \int_{\O^*} \mu(z) \mc S [g^{-1}](z) \,\dd^2 z \right],
\end{equation}
where $f$ and $g$ are the conformal maps  in Theorem~\ref{thm:intro_equiv} and $\mc S[\varphi] = \varphi'''/\varphi' - (3/2)(\varphi''/\varphi')^2$ is the Schwarzian derivative of $\varphi$.
\end{thm}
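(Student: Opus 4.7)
The plan is to differentiate $I^L(\gamma^{\vare \mu}) = \frac{1}{2}\int_{-\infty}^{\infty}|\dot\lambda_t^{\vare \mu}|^2 \dd t$ in $\vare$ and to use Theorem~\ref{thm:intro_driving_function} to rewrite the result as an integral of $\mu$ against a kernel that one then identifies with the Schwarzian expression in the statement.

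First, using the \Mo invariance of $I^L$ noted in the Remark, I normalize so that the root $\gamma(\infty)=\infty$, with $f,g$ as in Theorem~\ref{thm:intro_equiv}. The loop driving function $\lambda_\cdot$ is a consistent family of chordal Loewner driving functions: for each cut time $t_0\in\m R$, uniformizing $\Chat\setminus\gamma(-\infty,t_0]$ onto $\m C\setminus\m R_+$ and applying a square root places the remaining chord $\gamma[t_0,\infty)$ into the setup $(\m H;0,\infty)$ of Theorem~\ref{thm:intro_driving_function}. Because $\mu$ is compactly supported away from $\gamma$, its transport under this uniformization is a compactly supported infinitesimal Beltrami differential on $\m H$ disjoint from the chord, so Theorem~\ref{thm:intro_driving_function} yields closed expressions for $\partial_\vare \lambda_t^{\vare\mu}|_{\vare=0}$ and $\partial_\vare a_t^{\vare\mu}|_{\vare=0}$ for all $t\ge t_0$; letting $t_0\to-\infty$ provides these variations for every $t\in\m R$.

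Next I would differentiate $I^L(\gamma^{\vare\mu})$ while keeping track of the $\vare$-dependence of the capacity parametrization (encoded in $\partial_\vare a_t^{\vare\mu}$). After one integration by parts in $t$, followed by Fubini, the first variation takes the form
\begin{equation*}
    \frac{\dd}{\dd \vare}\bigg|_{\vare=0}I^L(\gamma^{\vare\mu})=\mathrm{Re}\int_{\Chat}\mu(z)K(z)\, \dd^2 z,
\end{equation*}
where $K(z)$ is a time integral over $\m R$ of the kernels $g_t'(z)^2/(g_t(z)-\lambda_t)-1/z$ and $g_t'(z)^2-1$ appearing in Theorem~\ref{thm:intro_driving_function}, weighted by $\ddot\lambda_t$ (and a correction coming from the variation of the parametrization). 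Boundary contributions at $t=\pm\infty$ vanish because the Loewner chain $g_t$ converges, up to \Mo adjustments, to $f^{-1}$ on $\O$ and to $g^{-1}$ on $\O^*$ in the two temporal limits, and $\dot\lambda_t \in L^2(\m R)$ for any Weil--Petersson quasicircle.

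The hard part will be identifying $K(z)$ with $-\frac{4}{\pi}\mc S[f^{-1}](z)$ on $\O$ and with $-\frac{4}{\pi}\mc S[g^{-1}](z)$ on $\O^*$. For this, I would exploit the Loewner cocycle $g_t=g_{s,t}\circ g_s$ together with the chain rule for the Schwarzian derivative: the $t$-integrand defining $K(z)$ should equal a total time derivative of $\mc S[g_t^{-1}]$ evaluated at the appropriate point, so that the time integral telescopes to the Schwarzian of the limiting conformal map $f^{-1}$ (resp.\ $g^{-1}$). The \Mo invariance of the Schwarzian then absorbs the auxiliary normalization made in the first step, producing the stated formula.
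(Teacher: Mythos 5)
Your route is essentially the paper's: normalize $\gamma(\infty)=\infty$, conjugate by the square root to reduce to the chordal setting of Theorem~\ref{thm:intro_driving_function}, recognize the resulting time integrand as a total $t$-derivative of a Schwarzian so that the time integral telescopes, and then pass to the limits of the cut times. The telescoping identity you anticipate is exactly the paper's Lemma~\ref{lem:d/dt-Sh_t}, $\partial_t \mc S h_t(z) = -\tfrac{3}{4}\dot\lambda_t\, h_t'(z)^2 h_t(z)^{-5/2}$. However, the step where you produce the kernel $K(z)$ has a genuine gap. You propose to integrate by parts in $t$, which puts $\ddot\lambda_t$ into the integrand and leaves boundary terms of the form $\dot\lambda_t\,\partial_\vare\lambda_t^\emu$ at $t=\pm\infty$. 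For a Weil--Petersson quasicircle the only regularity available is $\dot\lambda\in L^2(\m R)$: $\ddot\lambda_t$ need not exist, and $\dot\lambda\in L^2$ does not force $\dot\lambda_t\to 0$, so neither the new integrand nor the boundary terms are under control. The paper avoids this entirely: the variation $\lambda_t^\enu-\lambda_t$ is given by an explicit integral against a kernel that is smooth in $t$, hence is $C^1$ in $t$ even when $\lambda$ itself is merely absolutely continuous, so one may exchange $\partial_t$ and $\partial_\vare$ (Proposition~\ref{prop:d/dt lambda-variation}) and land directly on the weight $\dot\lambda_t$ --- which is what the telescoping identity requires. Note also that the variations of $\lambda$ and of $a$ must be combined as $\partial_\vare\big[(\dot\lambda_t^\enu)^2/\dot a_t^\enu\big]$; the terms involving $f_t'(z)^2/f_t(z)^2$ then cancel, leaving only $\dot\lambda_t f_t'(z)^2/f_t(z)^3$ (Corollary~\ref{cor:chord-LoewnerE}).

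Two further points that you assert but that require real work. First, interchanging $\dd/\dd\vare$ with $\int_{-\infty}^{\infty}\dd t$ needs $\int_{-\infty}^{\infty}\big|\dot\lambda_t\, f_t'(z)^2/f_t(z)^3\big|\,\dd t<\infty$ locally uniformly in $z$; by Cauchy--Schwarz this reduces to the large-time estimates $f_t(z)^2/(4t)\to 1$ and $|f_t'(z)|=|t|^{-1/2+o(1)}$, which the paper derives in the appendix from finiteness of the energy --- this is not free. Second, your claim that the chain converges ``up to M\"obius adjustments'' to $f^{-1}$ on $\O$ and $g^{-1}$ on $\O^*$ (i.e.\ $\mc S H_t\to\mc S H_\infty$ and $\mc S H_s\to 0$) is the content of Lemmas~\ref{lem:Schwarzian-negative-limit} and \ref{lem:Schwarzian-positive-limit} and is not automatic: the slit domains $\Chat\smallsetminus\gamma[-\infty,t]$ do not Carath\'eodory-converge to $\O$ or $\O^*$ separately, so the paper cuts along the hyperbolic geodesic from $\gamma(t)$ to $\gamma(\infty)$ and uses a harmonic-measure/Beurling-projection argument to show each point eventually lies on the correct side; the $s\to-\infty$ limit uses the Nehari bound. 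Your outline is the right skeleton, but as written the integration by parts would fail at the stated level of generality.
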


In the language of conformal field theory, this theorem states that the holomorphic stress-energy tensor of the Loewner energy is given by a multiple of the Schwarzian derivative of the uniformizing map on each complementary component of the curve. 

\begin{remark} 
The Loewner energy of $\g^{\vare \mu}$ does not depend on the choice of the solution $\omega^{\vare \mu}$ to the Beltrami equation, as all such solutions are equivalent up to post-compositions by M\"obius transformations of $\Chat$.
        In \cite{TT06}, $\mu$ is an $L^2$-harmonic Beltrami differential supported on only one side of the curve $\g$. 
       Here, we allow the support of $\mu$ to be on both sides of $\g$ but require it to be disjoint from $\g$.
\end{remark}

Since the support of $\mu$ is away from $\g$, there exists a (not necessarily simply connected) domain $\domain$ containing $\g$ such that $\domain \cap \text{supp}(\mu) = \varnothing$. In particular, $\o^{\emu}$ is conformal in $\domain$. In \cite[Thm.\,4.1]{W3}, the second author showed that the change of the Loewner energy under a conformal map in the neighborhood of the curve could be expressed in terms of the $\SLE_{8/3}$ loop measure introduced in \cite{werner_measure}, which is the induced measure obtained by taking the outer boundary of a loop under Brownian loop measure \cite{LSW_CR_chordal,LW2004loupsoup}.
Combining this with Theorem~\ref{thm:intro_energy-derivative-schwarzian}, we immediately obtain the following variational formula for the $\SLE_{8/3}$ loop measure. 
\begin{cor}
For every domain $\domain$ containing $\g$ such that $\domain \cap \textnormal{supp}(\mu) = \varnothing$,
we have
    \begin{equation} 
    \frac{\dd}{\dd \vare}\bigg|_{\vare=0} \mc W (\gamma^\emu, \o^{\emu} (\domain)^c;\Chat) = \frac{1}{3\pi} \, \mathrm{Re} \left[ \int_{\O} \mu(z) \mc S [f^{-1}](z) \,\dd^2 z+ \int_{\O^*} \mu(z) \mc S [g^{-1}](z) \,\dd^2 z \right]
\end{equation}
where $\mc W (\gamma^\emu, \o^{\emu} (\domain)^c;\Chat)$ denotes the total mass of loops intersecting both $\gamma^\emu$ and the complement of $\o^{\emu} (\domain)$ under the $\SLE_{8/3}$ loop measure on $\Chat$. 
\end{cor}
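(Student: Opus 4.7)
The plan is to combine Theorem~\ref{thm:intro_energy-derivative-schwarzian} with the identity of \cite[Thm.\,4.1]{W3}, which expresses the change of the Loewner energy of a Jordan curve under a conformal deformation on an annular neighborhood of the curve in terms of the $\SLE_{8/3}$ loop measure. Since the support of $\mu$ is compact and disjoint from $\g$, we fix an annulus $A$ containing $\g$ with $A \cap \mathrm{supp}(\mu) = \varnothing$; then, for every sufficiently small $\vare$, the self-map $\omega^{\emu}$ of $\Chat$ is conformal on $A$, which is precisely the setting in which the result of \cite{W3} applies.

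Applying the cited theorem to $\omega^{\emu}$ restricted to $A$ yields an identity of the form
\begin{equation*}
    I^L(\g^{\emu}) - I^L(\g) = -12\left[\mc W(\g^{\emu}, \omega^{\emu}(A)^c; \Chat) - \mc W(\g, A^c; \Chat)\right],
\end{equation*}
where the constant $-12$ is the one appearing in the central charge/$\SLE_{8/3}$ loop measure identity of \cite{W3}. Differentiating this equality at $\vare = 0$, noting that $\mc W(\g, A^c; \Chat)$ is independent of $\vare$, gives
\begin{equation*}
    \frac{\dd}{\dd \vare}\bigg|_{\vare=0} \mc W(\g^{\emu}, \omega^{\emu}(A)^c; \Chat) = -\frac{1}{12}\,\frac{\dd}{\dd \vare}\bigg|_{\vare=0} I^L(\g^{\emu}).
\end{equation*}
Substituting the expression for the right-hand side supplied by Theorem~\ref{thm:intro_energy-derivative-schwarzian} yields the announced prefactor $-(1/12)\cdot(-4/\pi) = 1/(3\pi)$, which completes the argument.

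The main point to justify is the differentiability at $\vare = 0$ of the loop measure mass $\vare \mapsto \mc W(\g^{\emu}, \omega^{\emu}(A)^c; \Chat)$. This reduces to the smooth $\vare$-dependence of the deformed curve $\g^{\emu}$ and the deformed annulus $\omega^{\emu}(A)$, both of which follow from the Ahlfors--Bers theorem on the real-analytic dependence of $\omega^{\emu}$ on $\vare$ in appropriate norms. Combined with the (quasi-)conformal covariance of the Brownian loop measure from which the $\SLE_{8/3}$ loop measure is defined via outer-boundary projection, together with a standard dominated-convergence argument to pass the derivative under the loop-measure integral, this gives the desired regularity. Since the left-hand side of the identity from \cite{W3} is manifestly differentiable in $\vare$, one may equivalently read differentiability of the right-hand side directly off that identity, thereby bypassing any delicate analysis of the loop measure.
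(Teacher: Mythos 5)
Your proposal is correct and follows exactly the route the paper takes: the paper states this corollary as an immediate consequence of combining Theorem~\ref{thm:intro_energy-derivative-schwarzian} with the conformal-restriction identity of \cite[Thm.\,4.1]{W3}, and your constant $-\tfrac{1}{12}\cdot(-\tfrac{4}{\pi})=\tfrac{1}{3\pi}$ matches. Your closing observation that differentiability of $\vare\mapsto \mc W(\g^{\emu},\omega^{\emu}(A)^c;\Chat)$ can be read off from the exact identity (since the Loewner-energy side is differentiable by Theorem~\ref{thm:intro_energy-derivative-schwarzian}) is precisely the right way to avoid any direct analysis of the loop measure.
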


\section{Deformation of chords in the half-plane} \label{sec:chord}

\subsection{Variation of the chordal Loewner driving function} \label{sec:chordal-variation}
Let $\eta:(0,+\infty) \to \m{H}$ be a continuously parametrized simple chord from $0$ to $\infty$. For general $\eta$, there exists a unique conformal map $\m{H}\smallsetminus \eta(0,t] \to \m{H}$ with the expansion 
\begin{equation*}
    z + \frac{2a_t}{z} + O\bigg(\frac{1}{z^2}\bigg) \quad \text{as} \;\; z\to \infty
\end{equation*}
where $a_t>0$ is a constant known as (one-half of) the \textit{half-plane capacity} of the curve $\eta[0,t]$. We call $t\mapsto a_t$ the \emph{capacity function} of $\eta$. Let us denote $T_+:= \lim_{t\to+\infty} a_t \in (0,+\infty]$.
\footnote{See \cite[Thm.\,1]{LLN_capacity} for an example where $T_+ < \infty$.}  After an appropriate time change, we may assume that $\eta$ is parametrized by its (one-half) half-plane capacity: i.e., $a_t=t$. The family of uniformizing maps $g_t: \m{H}\smallsetminus \eta(0,t] \to \m{H}$ satisfying
\begin{equation*}
    g_t(z) = z + \frac{2t}{z} + O\bigg(\frac{1}{z^2}\bigg) \quad \text{as} \;\; z\to \infty
\end{equation*}
under this \textit{capacity parametrization} is called the \emph{Loewner chain} corresponding to $\eta$.
The function $[0,T_+)\to\m{R}$, $t\mapsto \lambda_t := g_t(\eta(t))$ is called the \emph{driving function} of the curve $\eta$. 

We consider deformations of $\eta$ under quasiconformal self-maps of $\m{H}$. Let $\nu \in L^\infty(\m{H})$ be a complex-valued function with  compact support in $\m{H}\smallsetminus \eta$. The measurable Riemann mapping theorem states that for $\vare\in\m{R}$ with $ |\vare| < 1/\|\nu\|_\infty$, there exists a unique quasiconformal self-map $\psi^\enu:\m{H}\to \m{H}$ which solves the Beltrami equation
\begin{equation}
    \partial_{\bar z} \psi^\enu = (\vare\nu) \partial_z \psi^\enu
\end{equation}
and has $\psi^\enu(0) = 0$ and $\psi^\enu(z) - z = O(1)$ as $z\to\infty$. We adopt the following notations, illustrated in Figure~\ref{fig:qc-maps}.
\begin{itemize}
    \item Denote the deformed chord by $\eta^\enu:= \psi^\enu \circ \eta$.
    \item Let $g_t^\enu: \m{H}\smallsetminus \eta^\enu(0,t] \to \m{H}$ be the Loewner chain associated with the deformed curve $\eta^\enu[0,t]$. 
    \item Denote the driving function of $\eta^\enu$ by $\lambda_t^\enu := g_t^\enu(\eta^\enu(t))$.
    \item Note that $\eta^\enu$ is not necessarily parametrized by its half-plane capacity. Denote the capacity function of $\eta^\enu[0,t]$ by $a_t^\enu$, so that $g_t^\enu(z) = z + 2a_t^\enu z^{-1} + O(z^{-2})$ as $z\to\infty$.
\end{itemize}

 \begin{figure}
    \centering
    \includegraphics[width=0.95\textwidth]{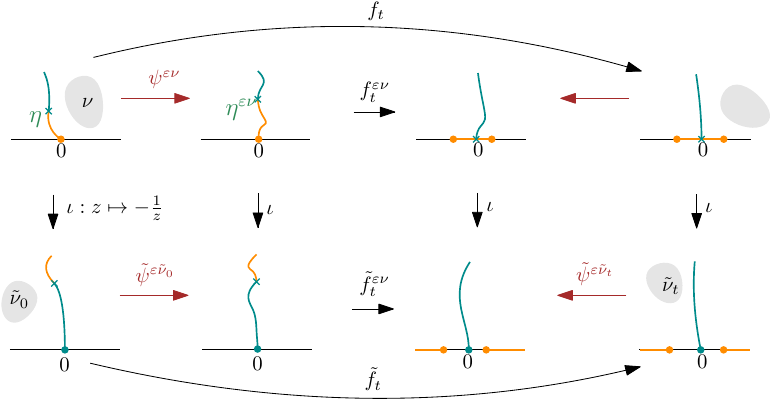}
    \caption{A commutative diagram illustrating the quasiconformal maps and related Loewner chains in Section~\ref{sec:chordal-variation}. The gray shaded areas denote the support of the Beltrami differentials. The arrows in red are quasiconformal maps, and those in black are conformal maps.}
    \label{fig:qc-maps}
\end{figure}

This section aims to prove Theorem~\ref{thm:intro_driving_function}. For this, we first express $\lambda_t^\enu$ and $a_t^\enu$ in terms of the pre-Schwarzian and Schwarzian derivatives of an appropriately conjugated Loewner chain (Lemma~\ref{lem:preS_driving}). We then find the first variations of these derivatives using the measurable Riemann mapping theorem (Proposition~\ref{prop:driving-fn-derivation}).

The \textit{centered Loewner chain} 
\begin{equation}
    f_t(z) : = g_t(z) - \lambda_t, 
\end{equation} 
satisfies $f_t(\eta(t))=0$ and 
\begin{equation}\label{eq:f_eq}
    f_t(z) = z - \lambda_t + \frac{2t}{z} + O\bigg(\frac{1}{z^2}\bigg) \qquad \text{as } z\to \infty.
\end{equation}
Let $\inversion (z) := -1/z$ be the inversion map. Define the \textit{inverted Loewner chain} by
\begin{equation}\label{eq:def-tilde f_t}
    \tilde f_t(z):= \inversion \circ f_t \circ \inversion (z) = -\frac{1}{f_t(-1/z)}. 
\end{equation}
Then, $\tilde f_t: \m{H} \smallsetminus (\inversion \circ \eta (0,t]) \to \m{H}$ is the uniformizing map with normalization $\tilde f_t(0) = 0$, $\tilde f_t'(0) = 1$, and $\tilde f_t ( \inversion \circ \eta(t)) = \infty$. Combining the expansion \eqref{eq:f_eq} of $f_t$ at $\infty$ with \eqref{eq:def-tilde f_t}, we see that as $z\to 0$,
\begin{equation}\label{eq:tilde f_t-Taylor-series}
    \tilde f_t(z) = -\frac{1}{-z^{-1} -\lambda_t - 2tz + O(z^2)} = z  - \lambda_t z^2 + (\lambda_t^2 - 2t) z^3 + O(z^4).
\end{equation}

Similarly, define 
\begin{equation}
    f_t^\enu(z):= g_t^\enu(z) - \lambda_t^\enu \quad \text{and} \quad \tilde f_t^\enu(z):= \inversion \circ f_t^\enu \circ \inversion(z).
\end{equation}
A calculation analogous to \eqref{eq:tilde f_t-Taylor-series} using the series expansion of $g_t^\enu$ at $\infty$ leads to
\begin{equation}\label{eq:tilde f_t^enu-Taylor-series}
    \tilde f_t^\enu(z) = z - \lambda_t^\enu z^2 + ((\lambda_t^\enu)^2 - 2a_t^\enu)z^3 + O(z^4) \quad \text{as} \;\; z\to 0.
\end{equation}
By the Schwarz reflection principle, $\tilde f_t$ and $\tilde f_t^\enu$ extend respectively to conformal maps on $\m{C} \smallsetminus \inversion(\eta(0,t] \cup \overline{\eta(0,t]})$ and $\m{C} \smallsetminus \inversion(\eta^\enu(0,t] \cup \overline{\eta^\enu(0,t]})$, where $\overline{\cdot}$ denotes the complex conjugate. In particular, they are conformal in some neighborhood of 0. 

Recall that the \textit{pre-Schwarzian} (also known as non-linearity) and \textit{Schwarzian derivatives} of a conformal map $\varphi$ are, respectively, 
\begin{equation}
    \Nonlinearity \varphi = \frac{\varphi''}{\varphi'} \quad \text{and} \quad \Schwarzian \varphi = \frac{\varphi'''}{\varphi'} - \frac{3}{2}\bigg(\frac{\varphi''}{\varphi'}\bigg)^2.
\end{equation}
The chain rules for the pre-Schwarzian and Schwarzian derivatives are
\begin{equation}\label{eq:chain_rule}
    \Nonlinearity [f\circ g] = ((\Nonlinearity f) \circ g)g' + \Nonlinearity g \quad \text{ and } \quad \Schwarzian [f\circ g] = ((\Schwarzian f)\circ g) (g')^2 + \Schwarzian g
\end{equation}
for any conformal maps $f$ and $g$ such that $f\circ g$ is well-defined.

\begin{lem}\label{lem:preS_driving}
Consider $\tilde f_t$ and $\tilde f_t^\enu$ as conformal maps extended by reflection to a neighborhood of 0. Then,
\begin{equation}\label{eq:derivatives-drivingfn-capacity}
    \lambda_t = -\frac{1}{2} \Nonlinearity \tilde f_t(0),\quad \lambda_t^\enu = -\frac{1}{2} \Nonlinearity \tilde f_t^\enu(0), \quad \text{and} \quad a_t^\enu = -\frac{1}{12} \Schwarzian \tilde f_t^\enu(0).
\end{equation}
\end{lem}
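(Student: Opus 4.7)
The plan is to prove the lemma by pure Taylor computation, since the work has effectively already been done in equations (2.6) and (2.8) above. Both $\tilde f_t$ and $\tilde f_t^\enu$ are conformal in a neighborhood of $0$ (via Schwarz reflection across $\m R \setminus \iota(\eta(0,t]\cup \overline{\eta(0,t]})$, which is open near $0$ because $0\notin \iota(\eta(0,t])$). In particular their pre-Schwarzians and Schwarzians at $0$ are unambiguously defined.

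From \eqref{eq:tilde f_t-Taylor-series} I read off $\tilde f_t'(0)=1$, $\tilde f_t''(0) = -2\lambda_t$, and $\tilde f_t'''(0) = 6(\lambda_t^2 - 2t)$. Plugging into the definition of the pre-Schwarzian,
\begin{equation*}
\Nonlinearity \tilde f_t(0) \;=\; \frac{\tilde f_t''(0)}{\tilde f_t'(0)} \;=\; -2\lambda_t,
\end{equation*}
which yields the first identity in \eqref{eq:derivatives-drivingfn-capacity}. The second identity follows by the identical calculation applied to \eqref{eq:tilde f_t^enu-Taylor-series}, for which $\tilde f_t^{\enu\,\prime}(0)=1$ and $\tilde f_t^{\enu\,\prime\prime}(0)=-2\lambda_t^\enu$.

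For the capacity identity, I use the Schwarzian on the expansion \eqref{eq:tilde f_t^enu-Taylor-series}, for which additionally $\tilde f_t^{\enu\,\prime\prime\prime}(0)=6((\lambda_t^\enu)^2 - 2a_t^\enu)$:
\begin{equation*}
\Schwarzian \tilde f_t^\enu(0) \;=\; \frac{\tilde f_t^{\enu\,\prime\prime\prime}(0)}{\tilde f_t^{\enu\,\prime}(0)} - \frac{3}{2}\bigg(\frac{\tilde f_t^{\enu\,\prime\prime}(0)}{\tilde f_t^{\enu\,\prime}(0)}\bigg)^2 \;=\; 6(\lambda_t^\enu)^2 - 12 a_t^\enu - \tfrac{3}{2}\cdot 4(\lambda_t^\enu)^2 \;=\; -12 a_t^\enu,
\end{equation*}
giving the third identity. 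There is no real obstacle here beyond correctly tracking the Taylor coefficients; the content of the lemma lies entirely in the observation that conjugating the Loewner map by the inversion $\iota$ turns the data of $\lambda_t$ and $t$ (respectively $\lambda_t^\enu$ and $a_t^\enu$), which appear in the expansion of $g_t$ at $\infty$, into the second and third Taylor coefficients of a map fixing $0$ with derivative $1$, which is exactly what the pre-Schwarzian and Schwarzian extract.
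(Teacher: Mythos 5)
Your proposal is correct and is exactly the paper's argument: the authors likewise prove the lemma by reading off the Taylor coefficients of $\tilde f_t$ and $\tilde f_t^{\vare\nu}$ at $0$ from the expansions \eqref{eq:tilde f_t-Taylor-series} and \eqref{eq:tilde f_t^enu-Taylor-series} and substituting them into the definitions of the pre-Schwarzian and Schwarzian. You have simply written out the arithmetic that the paper leaves implicit.
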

\begin{proof}
    The lemma follows from inspecting the coefficients of \eqref{eq:tilde f_t-Taylor-series} and \eqref{eq:tilde f_t^enu-Taylor-series}.
\end{proof}

Let $\vare \tilde \nu_t$ be the Beltrami coefficient of the quasiconformal map
\begin{equation}
    \tilde \psi^{\vare \tilde \nu_t} := \tilde f_t^\enu \circ \inversion \circ \psi^\enu \circ \inversion \circ \tilde f_t^{-1} = \inversion \circ f_t^\enu \circ \psi^\enu \circ f_t^{-1} \circ \inversion.
\end{equation}
In particular, $\tilde \psi^{\vare \tilde \nu_0} = \inversion\circ \psi^\enu \circ \inversion$ is the quasiconformal map which deforms $\inversion \circ \eta$ to $\inversion \circ \eta^{\enu}$. Note that $\tilde \psi^{\vare \tilde \nu_t}$ is conformal in a neighborhood of 0 and satisfies $\tilde \psi^{\vare \tilde \nu_t}(0) = 0$, $(\tilde \psi^{\vare \tilde \nu_t})'(0) = 1$, and $\tilde \psi^{\vare \tilde \nu_t}(\infty) = \infty$. 

\begin{prop}\label{prop:driving-fn-derivation}
Let $\tilde\nu_t$ be the Beltrami differential defined above. 
Then, 
\begin{equation}\label{eq:driving-function-perturbed}
    \begin{aligned}
    \frac{\partial \lambda_t^\enu}{\partial \vare}\bigg|_{\vare=0} &=  \frac{2}{\pi} \, \mathrm{Re} \int_{\m{H}} \frac{\tilde\nu_t(z)-\tilde\nu_0(z)}{z^3} \,\dd^2 z  \\&= \frac{2}{\pi} \, \mathrm{Re} \int_{\m{H}} \tilde\nu_0(z) \bigg(\frac{\tilde f_t'(z)^2}{\tilde f_t(z)^3} - \frac{1}{z^3} \bigg) \,\dd^2 z \\
    &=  -\frac{2}{\pi} \, \mathrm{Re} \int_{\m{H}} \nu(z) \bigg(\frac{f_t'(z)^2}{f_t(z)} - \frac{1}{z}\bigg) \,\dd^2 z 
    \end{aligned}
\end{equation}    
and
\begin{equation}\label{eq:capacity-perturubed}
    \begin{aligned}
    \frac{\partial a_t^\enu}{\partial t}\bigg|_{\vare=0} &= \frac{1}{\pi} \, \mathrm{Re} \int_{\m{H}} \frac{\tilde\nu_t(z)-\tilde\nu_0(z)}{z^4} \,\dd^2 z \\ 
    &=  \frac{1}{\pi} \, \mathrm{Re} \int_{\m{H}} \tilde\nu_0(z) \bigg(\frac{\tilde f_t'(z)^2}{\tilde f_t(z)^4} - \frac{1}{z^4} \bigg) \,\dd^2 z \\
    &= \frac{1}{\pi} \, \mathrm{Re} \int_{\m{H}} \nu(z) \big(f_t'(z)^2 - 1 \big) \,\dd^2 z .
    \end{aligned}
\end{equation}
\end{prop}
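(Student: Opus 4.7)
The plan is to combine the identities of Lemma~\ref{lem:preS_driving},
\begin{equation*}
  \lambda_t^\enu = -\tfrac12\,\Nonlinearity \tilde f_t^\enu(0),\qquad a_t^\enu = -\tfrac{1}{12}\,\Schwarzian \tilde f_t^\enu(0),
\end{equation*}
with the commutative-diagram identity $\tilde\psi^{\vare\tilde\nu_t}\circ \tilde f_t = \tilde f_t^\enu\circ G^\enu$, where $G^\enu:=\inversion\circ\psi^\enu\circ\inversion$, and with the Ahlfors--Bers variational formula. Near $0$, both $\tilde f_t$ and $G^\enu$ are conformal, fix $0$, and have derivative $1$ there, so the composition rules $\Nonlinearity(f\circ g) = (\Nonlinearity f\circ g)\,g' + \Nonlinearity g$ and $\Schwarzian(f\circ g) = (\Schwarzian f\circ g)(g')^2 + \Schwarzian g$ collapse at $z=0$ to additivity, yielding
\begin{equation*}
  \Nonlinearity \tilde f_t^\enu(0) - \Nonlinearity \tilde f_t(0) = \Nonlinearity \tilde\psi^{\vare\tilde\nu_t}(0) - \Nonlinearity G^\enu(0),
\end{equation*}
and similarly for $\Schwarzian$. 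Differentiating in $\vare$ at $\vare=0$ (where $\tilde\psi^{\vare\tilde\nu_t}$ and $G^\enu$ are both the identity, so the $(\phi''/\phi')^2$ piece of $\Schwarzian$ vanishes to first order) reduces the problem to computing $\partial_\vare(\tilde\psi^{\vare\tilde\nu_t})''(0)|_{\vare=0}$ and $\partial_\vare(\tilde\psi^{\vare\tilde\nu_t})'''(0)|_{\vare=0}$ together with their $t=0$ counterparts; since $\tilde f_0=\mathrm{id}$ forces $\tilde\psi^{\vare\tilde\nu_0}=G^\enu$, the latter are free.

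I then apply the Ahlfors--Bers formula to $\tilde\psi^{\vare\tilde\nu_t}$, extended from $\m{H}$ to $\m{C}$ by Schwarz reflection. The extension has compactly supported Beltrami coefficient, equal to $\tilde\nu_t$ on $\m{H}$ and $\overline{\tilde\nu_t(\bar w)}$ on $\overline{\m{H}}$, and the three-point normalization $\tilde\psi(0)=0$, $\tilde\psi'(0)=1$, $\tilde\psi(\infty)=\infty$. A standard Cauchy--Green computation identifies the kernel that enforces these normalizations as $K(w,z):=\tfrac{1}{w-z}-\tfrac1w-\tfrac{z}{w^2}=\tfrac{z^2}{w^2(w-z)}$, giving
\begin{equation*}
  \partial_\vare\tilde\psi^{\vare\tilde\nu_t}(z)\big|_{\vare=0} = -\tfrac{1}{\pi}\int_{\m{C}}\tilde\nu_t^{\mathrm{ext}}(w)\,K(w,z)\,\dd^2 w.
\end{equation*}
For real $z$ near $0$, reflection symmetry folds the $\m{C}$-integral onto $\m{H}$ and produces a real part; Taylor-expanding $K(w,z)=\sum_{k\ge 0}z^{k+2}/w^{k+3}$ then yields
\begin{equation*}
  \partial_\vare(\tilde\psi^{\vare\tilde\nu_t})''(0)\big|_{\vare=0}=-\tfrac{4}{\pi}\,\mathrm{Re}\!\int_{\m{H}}\tfrac{\tilde\nu_t(w)}{w^3}\dd^2 w,\quad \partial_\vare(\tilde\psi^{\vare\tilde\nu_t})'''(0)\big|_{\vare=0}=-\tfrac{12}{\pi}\,\mathrm{Re}\!\int_{\m{H}}\tfrac{\tilde\nu_t(w)}{w^4}\dd^2 w.
\end{equation*}
Plugging these and their $t=0$ analogues into the relations of the first paragraph produces the top equalities of \eqref{eq:driving-function-perturbed} and \eqref{eq:capacity-perturubed}.

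The remaining equalities are change-of-variable rewrites of $\int_{\m{H}}(\tilde\nu_t-\tilde\nu_0)/w^k\,\dd^2 w$. From $\tilde\psi^{\vare\tilde\nu_t}=\tilde f_t^\enu\circ G^\enu\circ\tilde f_t^{-1}$ with $\tilde f_t^\enu$ and $\tilde f_t^{-1}$ conformal, $\tilde\nu_t$ is the pullback of $\tilde\nu_0$ by $\tilde f_t^{-1}$; substituting $w=\tilde f_t(\zeta)$ and using $|\tilde f_t'(\zeta)|^2\cdot \tilde f_t'(\zeta)/\overline{\tilde f_t'(\zeta)}=\tilde f_t'(\zeta)^2$ converts $\int_{\m{H}}\tilde\nu_t(w)w^{-k}\dd^2 w$ into $\int_{\m{H}}\tilde\nu_0(\zeta)\,\tilde f_t'(\zeta)^2\tilde f_t(\zeta)^{-k}\dd^2\zeta$, giving the middle forms. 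For the third forms, $G^\enu=\inversion\circ\psi^\enu\circ\inversion$ expresses $\tilde\nu_0(\zeta)=\nu(-1/\zeta)\cdot\zeta^2/\bar\zeta^2$ as a pullback of $\nu$ by $\inversion$; the substitution $\zeta=-1/z$, combined with the pointwise identities $\tilde f_t(-1/z)=-1/f_t(z)$ and $\tilde f_t'(-1/z)=z^2 f_t'(z)/f_t(z)^2$ (both read off from \eqref{eq:def-tilde f_t}), rewrites the middle integral as $(-1)^k\int_{\m{H}}\nu(z)\,f_t'(z)^2/f_t(z)^{4-k}\dd^2 z$, which specializes to the claimed formulas at $k=3$ and $k=4$.

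The main obstacle I anticipate is the second step: pinning down the correct Cauchy kernel $z^2/[w^2(w-z)]$ under the three-point normalization and justifying the reduction of the $\m{C}$-integral to an integral over $\m{H}$ via the reflection symmetry. Once that kernel is in hand, the remainder of the proof is algebra and a chain of substitutions.
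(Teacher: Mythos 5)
Your proposal is correct and follows essentially the same route as the paper: the same decomposition $\tilde f_t^\enu = \tilde\psi^{\vare\tilde\nu_t}\circ\tilde f_t\circ(\tilde\psi^{\vare\tilde\nu_0})^{-1}$, the same additivity of $\Nonlinearity$ and $\Schwarzian$ at $0$, the same Ahlfors--Bers kernel $\tfrac{1}{w-z}-\tfrac1w-\tfrac{z}{w^2}$ with Schwarz reflection folding the $\m C$-integral onto $\m H$, and the same two changes of variables (pullback by $\tilde f_t$, then by $\inversion$). The only cosmetic difference is that you expand the kernel as $\sum_k z^{k+2}/w^{k+3}$ where the paper differentiates it directly, and your computations (including the signs $(-1)^k$ in the final substitution) check out against the stated formulas.
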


\begin{proof}[Proof of Theorem~\ref{thm:intro_driving_function}]
    It suffices to substitute $f_t(z) = g_t(z) -\lambda_t$ in Proposition~\ref{prop:driving-fn-derivation}.
\end{proof}
\begin{proof}[Proof of Proposition~\ref{prop:driving-fn-derivation}]
We can extend $\tilde \psi^{\vare \tilde \nu_t}$ to a quasiconformal self-map of the Riemann sphere $\Chat = \m{C} \cup \{\infty\}$ by reflecting it with respect to the real axis. The Beltrami coefficient for this extension of $\tilde \psi^{\vare \tilde \nu_t}$ is $\vare \hat \nu_t$ where
\begin{equation}\label{eq:nu_t-reflection}
    \hat \nu_t (z):= \begin{cases}
        \tilde\nu_t(z) & \text{if } z \in \m{H}, \\
        0 & \text{if } z\in \m{R}, \\
        \overline{\tilde\nu_t(\overline{z})} & \text{if } z\in \m{H}^*.
    \end{cases}
\end{equation}
Then, by the measurable Riemann mapping theorem, 
\begin{equation}\label{eq:beltrami-solution-halfplane}
    \tilde \psi^{\vare \tilde \nu_t}(\zeta) = \zeta - \frac{\vare}{\pi}\int_{\m{C}} \hat \nu_t(z) \left(\frac{1}{z-\zeta} - \frac{1}{z} - \frac{\zeta}{z^2} \right) \dd^2 z + o(\vare)
\end{equation}
locally uniformly in $\zeta\in\m{C}$ as $\vare \to 0$. Moreover, since $\partial_\vare$ commutes with $\partial_\zeta$ when applied to $\tilde \psi^{\vare \tilde \nu_t}$ and $\hat \nu_t$ has a compact support in $\m{C}\smallsetminus\{0\}$, we have
\begin{align}\label{eq:nu_t-nonlinearity}
    \frac{\partial}{\partial \vare} \bigg|_{\vare=0} \Nonlinearity \tilde \psi^{\vare \tilde \nu_t}(0) &= -\frac{2}{\pi} \int_{\m{C}} \frac{\hat \nu_t(z)}{z^3} \,\dd^2 z = -\frac{4}{\pi} \, \mathrm{Re}\int_{\m{H}} \frac{\tilde\nu_t(z)}{z^3} \dd^2 z, \\ \label{eq:nu_t-schwarzian}
    \frac{\partial}{\partial \vare} \bigg|_{\vare=0} \Schwarzian \tilde \psi^{\tilde \nu_t}(0)&= -\frac{6}{\pi} \int_{\m{C}} \frac{\hat \nu_t (z)}{z^4} \,\dd^2 z = -\frac{12}{\pi} \, \mathrm{Re} \int_{\m{H}} \frac{\tilde\nu_t(z)}{z^4} \,\dd^2 z.   
\end{align}
Since $\tilde f_t^\enu = \tilde \psi^{\vare \tilde \nu_t} \circ \tilde f_t \circ (\tilde \psi^{\vare \tilde \nu_0})^{-1}$ and $\tilde \psi^{\vare \tilde \nu_t}(z)$, $\tilde f_t(z)$, and $\tilde \psi^{\vare \tilde \nu_0}(z)$ all behave as $z + o(z)$ as $z\to 0$, we have from Lemma~\ref{lem:preS_driving} and the chain rules \eqref{eq:chain_rule} that
\begin{align} 
\label{eq:drivingfn-chain-rule}\begin{split}
    -2\lambda_t^\emu &= \Nonlinearity \tilde f_t^\emu(0) = \Nonlinearity \tilde \psi^{\vare \tilde \nu_t}(0) + \Nonlinearity \tilde f_t(0) - \Nonlinearity \tilde \psi^{\vare \tilde \nu_0}(0) \\
    &= \Nonlinearity \tilde \psi^{\vare \tilde \nu_t}(0) - 2\lambda_t - \Nonlinearity \tilde \psi^{\vare \tilde \nu_0}(0),
\end{split}\\
\label{eq:capacity-chain-rule}\begin{split}
    -12a_t^\emu & = \Schwarzian \tilde f_t^\emu(0) = \Schwarzian \psi_t^{\vare\nu_t}(0) + \Schwarzian \tilde f_t(0) - \Schwarzian \psi^{\vare\nu_0}(0) \\
    &= \Schwarzian \psi_t^{\vare\nu_t}(0) -12 t - \Schwarzian \psi^{\vare\nu_0}(0) .
\end{split}
\end{align}
Combining these with \eqref{eq:nu_t-nonlinearity} and \eqref{eq:nu_t-schwarzian}, we obtain the first equalities in \eqref{eq:driving-function-perturbed} and \eqref{eq:capacity-perturubed}.

Observe that $\tilde \psi^{\vare \tilde \nu_t} = \tilde f_t^\enu \circ \tilde \psi^{\vare \tilde \nu_0} \circ \tilde f_t^{-1}$, where $\tilde f_t^\enu$ and $\tilde f_t^{-1}$ are conformal maps. Hence, by the composition rule for Beltrami coefficients,
\begin{equation}\label{eq:nu_t-coord-change}
    \tilde\nu_t(\tilde f_t(z)) = \tilde\nu_0(z) \frac{\tilde f_t'(z)^2}{|\tilde f_t'(z)|^2}.
\end{equation} 
Substituting \eqref{eq:nu_t-coord-change} into \eqref{eq:nu_t-nonlinearity} and \eqref{eq:nu_t-schwarzian}, we have that 
\begin{align}
    \label{eq:nu_0-nonlinearity} \frac{\partial}{\partial \vare} \bigg|_{\vare=0} \Nonlinearity \tilde \psi^{\vare \tilde \nu_t}(0) &= -\frac{4}{\pi}\,\mathrm{Re}\int_{\m{H}} \tilde\nu_0(z)\frac{\tilde f_t'(z)^2}{\tilde f_t(z)^3}\,\dd^2 z ,\\ 
    \label{eq:nu_0-schwarzian} \frac{\partial}{\partial \vare} \bigg|_{\vare=0} \Schwarzian \tilde \psi^{\vare \tilde \nu_t}(0) &= -\frac{12}{\pi}\,\mathrm{Re}\int_{\m{H}} \tilde\nu_0(z)\frac{\tilde f_t'(z)^2}{\tilde f_t(z)^4}\,\dd^2 z .
\end{align}
We thus have the second equalities in \eqref{eq:driving-function-perturbed} and \eqref{eq:capacity-perturubed}.

Finally, recall that $\tilde \psi^{\vare \tilde \nu_0} = \inversion \circ \psi^\enu \circ \inversion$. 
Since the inversion map $\inversion(\zeta)=-1/\zeta$ is conformal, 
\begin{equation}
    \tilde\nu_0(-1/\zeta) = \nu(\zeta) \frac{|\zeta|^4}{\zeta^4}.
\end{equation}

Recall that $\tilde f_t(z) = -1/f_t(-1/z)$, and hence $\tilde f_t'(z) = f_t'(-1/z)/(zf_t(-1/z))^2$.
Substituting $z = -1/\zeta$ in \eqref{eq:nu_0-nonlinearity} and \eqref{eq:nu_0-schwarzian}, we obtain the final equalities in \eqref{eq:driving-function-perturbed} and \eqref{eq:capacity-perturubed}.
\end{proof}

\subsection{Variation of chordal Loewner energy}

Let $\eta : (0,T_+) \to \m H$ be a simple chord from $0$ to $\infty$ parametrized by half-plane capacity (\ie $a_t = t$) 
and $t\mapsto \l_t$ be its Loewner driving function. The \emph{Loewner energy} of $\eta$ (resp. the partial Loewner energy of $\eta$ up to time $T\in(0,T_+)$) is 
$$I^C(\eta) = \frac12\int_0^{T_+} \dot \lambda_t ^2 \, \dd t \qquad \text{resp.} \qquad I^C(\eta(0,T]) = \frac12\int_0^T \dot \lambda_t ^2 \, \dd t$$
if $\lambda$ is absolutely continuous and $\dot \lambda$ is its almost everywhere defined derivative with respect to $t$. We set $I^C(\eta) = \infty$ if  $\lambda$ is not absolutely continuous. If $I^C(\eta) < +\infty$, then $T_+ = +\infty$; see \cite[Thm.\ 2.4]{sle_ld_survey}.

We also define the Loewner energy of a simple chord $\eta$ in a simply connected domain $D$ connecting two distinct prime ends $a,b$ as 
$$I^C_{D;a,b} (\eta) : = I^C (\varphi(\eta))$$
where $\varphi$ is any conformal map $D \to \m H$ with $\varphi(a) =0$ and $\varphi(b) =\infty$. The partial Loewner energy in $(D;a,b)$ is defined similarly.

When $\lambda_t$ is absolutely continuous, we can compute the first variations of $\dot \lambda_t^\enu$ and $\dot a_t^\enu$.

\begin{prop}\label{prop:d/dt lambda-variation}
    For all $\vare \in (-1/\|\nu\|_\infty,1/\|\nu\|_\infty)$, the functions 
    $t\mapsto \lambda_t^\enu - \lambda_t$ and $t\mapsto a_t^\enu$ 
    are continuously differentiable. Furthermore, if $t\mapsto\lambda_t$ is absolutely continuous, then $t\mapsto \lambda_t^\enu$ is also absolutely continuous and, for almost every $t$,
    \begin{equation}
       \frac{\partial \dot \lambda_t^\enu}{\partial \vare} \bigg|_{\vare=0} = \frac{1}{\pi}\,\mathrm{Re} \int_{\m{H}} \nu(z) \bigg(12\frac{f_t'(z)^2}{f_t(z)^3}-2\dot \lambda_t\frac{f_t'(z)^2}{f_t(z)^2} \bigg) \,\dd^2 z
    \end{equation}
    and
    \begin{equation}
       \frac{\partial \dot a_t^\enu}{\partial \vare} \bigg|_{\vare=0} = -\frac{4}{\pi}\,\mathrm{Re} \int_{\m{H}} \nu(z) \frac{f_t'(z)^2}{f_t(z)^2} \, \dd^2 z.
    \end{equation}
\end{prop}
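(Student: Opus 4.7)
The plan is to obtain both first-variation formulas by differentiating in $t$ the integral representations from Proposition~\ref{prop:driving-fn-derivation} and swapping $\partial_t$ with $\partial_\vare$ at $\vare=0$. For this interchange to be legitimate, I first establish that $\lambda_t^\enu - \lambda_t$ and $a_t^\enu$ are continuously differentiable in $t$, then apply the Loewner equation $\partial_t g_t(z) = 2/f_t(z)$ and its $z$-derivative $\partial_t f_t'(z) = -2 f_t'(z)/f_t(z)^2$ to compute the relevant $t$-derivatives of the integrands.

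For $a_t^\enu$, the regularity is easy: the integrand $\nu(z)(f_t'(z)^2-1)$ of Proposition~\ref{prop:driving-fn-derivation} has $t$-derivative $-4\nu(z) f_t'(z)^2/f_t(z)^2$, which does not involve $\dot\lambda_t$ and is jointly continuous in $(t, z)$ on $\operatorname{supp}(\nu)$. Hence, $a_t^\enu$ is $C^1$ in $t$ unconditionally, and the formula for $\partial_\vare \dot a_t^\enu|_{\vare=0}$ follows by differentiating under the integral sign, justified by the compactness of $\operatorname{supp}(\nu)$ in $\HH \setminus \eta$. For $\lambda_t^\enu - \lambda_t$, however, the corresponding $t$-derivative of $f_t'(z)^2/f_t(z)$ picks up a $\dot\lambda_t f_t'(z)^2/f_t(z)^2$ term, so one cannot directly differentiate the formula for $\partial_\vare \lambda_t^\enu|_{\vare=0}$ without assuming absolute continuity of $\lambda_t$. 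Showing that the difference $\lambda_t^\enu - \lambda_t$ is nonetheless $C^1$ in $t$ is the main technical hurdle. My approach is to combine Lemma~\ref{lem:preS_driving}, which gives $\lambda_t^\enu - \lambda_t = -\tfrac{1}{2}(\Nonlinearity \tilde\psi^{\vare\tilde\nu_t}(0) - \Nonlinearity \tilde\psi^{\vare\tilde\nu_0}(0))$, with the fact that the Beltrami differential $\tilde\nu_t$ has support away from the tip $0$, to show that the difference is indeed $C^1$ in $t$; the underlying heuristic is that the non-smoothness of $\lambda_t$ enters both pre-Schwarzians identically and cancels in the difference.

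With the regularity in hand, absolute continuity of $\lambda_t^\enu$ follows at once from the decomposition $\lambda_t^\enu = \lambda_t + (\lambda_t^\enu - \lambda_t)$ as a sum of an absolutely continuous and a $C^1$ function. For a.e.\ $t$ where $\dot\lambda_t$ exists, swapping $\partial_\vare$ and $\partial_t$ via the $C^1$ regularity gives $\partial_\vare\dot\lambda_t^\enu|_{\vare=0} = \partial_t(\partial_\vare\lambda_t^\enu|_{\vare=0})$. Substituting the computed $t$-derivative
\begin{equation*}
\partial_t\left(\frac{f_t'(z)^2}{f_t(z)}\right) = -\frac{6 f_t'(z)^2}{f_t(z)^3} + \dot\lambda_t\frac{f_t'(z)^2}{f_t(z)^2}
\end{equation*}
into the formula from Proposition~\ref{prop:driving-fn-derivation} and collecting signs then yields the claimed first variation of $\dot\lambda_t^\enu$.
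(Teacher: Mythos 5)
Your computational skeleton coincides with the paper's: differentiate the integral representations of Proposition~\ref{prop:driving-fn-derivation} in $t$ using $\partial_t f_t = 2/f_t - \dot\lambda_t$ and $\partial_t f_t' = -2f_t'/f_t^2$, interchange $\partial_t$ and $\partial_\vare$, and the identities $\partial_t(f_t'^2/f_t) = -6f_t'^2/f_t^3 + \dot\lambda_t f_t'^2/f_t^2$ and $\partial_t(f_t'^2) = -4f_t'^2/f_t^2$ with the resulting signs are all correct. The gap is in the justification of the interchange, which is the entire non-computational content of the proposition. First, the integral $\frac{1}{\pi}\,\mathrm{Re}\int_{\m{H}}\nu(z)(f_t'(z)^2-1)\,\dd^2 z$ is a formula for $\partial_\vare a_t^\enu|_{\vare=0}$, not for $a_t^\enu$; showing it is $C^1$ in $t$ gives regularity of the $\vare$-derivative on the slice $\vare=0$, and the inference ``hence $a_t^\enu$ is $C^1$ in $t$ unconditionally'' does not follow. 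More importantly, equality of mixed partials cannot be extracted from data at $\vare=0$ alone: one needs joint regularity of $(\vare,t)\mapsto a_t^\enu$ (resp.\ $\lambda_t^\enu-\lambda_t$) in a two-dimensional neighborhood. The paper supplies this by showing $(\vare,t)\mapsto \vare\hat\nu_t$ is continuously differentiable and invoking the Ahlfors--Bers smooth dependence of the normalized solution $\tilde\psi^{\vare\tilde\nu_t}$ on its Beltrami coefficient, so that $\Nonlinearity\tilde\psi^{\vare\tilde\nu_t}(0)$ and $\Schwarzian\tilde\psi^{\vare\tilde\nu_t}(0)$ are $C^1$ in $(\vare,t)$; only then is Schwarz's theorem applicable.

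Second, for $\lambda_t^\enu-\lambda_t$ you correctly isolate the $C^1$-in-$t$ claim as the crux but then offer only a heuristic, and the heuristic is misdirected. In the identity $\lambda_t^\enu-\lambda_t = -\tfrac12\bigl(\Nonlinearity\tilde\psi^{\vare\tilde\nu_t}(0)-\Nonlinearity\tilde\psi^{\vare\tilde\nu_0}(0)\bigr)$ the second pre-Schwarzian is constant in $t$, so no cancellation of non-smoothness between the two terms can occur; and if you instead mean the pre-Schwarzians $\Nonlinearity\tilde f_t^\enu(0)=-2\lambda_t^\enu$ and $\Nonlinearity\tilde f_t(0)=-2\lambda_t$, then ``the non-smoothness cancels in the difference'' is precisely the statement to be proved, not an argument for it. All of the $t$-dependence sits in the single family $\tilde\nu_t$, and the point to establish is that $t\mapsto\Nonlinearity\tilde\psi^{\vare\tilde\nu_t}(0)$ is $C^1$. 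The paper's mechanism is that $\tilde\nu_t\circ\tilde f_t = \tilde\nu_0\,\tilde f_t'^2/|\tilde f_t'|^2$, where $f_t'=g_t'$ solves $\partial_t g_t' = -2g_t'/f_t^2$ (an equation free of $\dot\lambda_t$), so $t\mapsto\hat\nu_t$ is differentiable on the support of the coefficient, and Ahlfors--Bers converts this into differentiability of the pre-Schwarzian at $0$. The observation that the support stays away from $0$ guarantees convergence of the integral representations but does not by itself produce the needed $t$-regularity; without the Ahlfors--Bers input the proof is incomplete.
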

\begin{proof}
    From the Loewner equation $\partial_t g_t(z) = 2/(g_t(z)-\lambda_t)$, we have 
    \begin{equation*}
        \partial_t (f_t(z)+\lambda_t) = \frac{2}{f_t(z)} \quad \text{and} \quad
        \partial_t f_t'(z) = -\frac{2f_t'(z)}{f_t(z)^2}.
    \end{equation*}
    Recalling $\tilde f_t(z) = -1/f_t(-1/z)$, it follows that $\tilde f_t'(z)$ is continuously differentiable in $t$.
    From \eqref{eq:nu_t-reflection} and \eqref{eq:nu_t-coord-change}, we see that $(\vare,t) \mapsto \vare \hat \nu_t$ is continuously differentiable. Then,
    $\lambda_t^\enu - \lambda_t = -\frac{1}{2}(\Nonlinearity \tilde \psi^{\vare \tilde \nu_t}(0) - \Nonlinearity \tilde \psi^{\vare \tilde \nu_0}(0))$ and $a_t^\enu - t = -\frac{1}{12}(\Schwarzian \tilde \psi^{\vare \tilde \nu_t}(0) - \Schwarzian \tilde \psi^{\vare \tilde \nu_0}(0))$ are continuously differentiable in the same variables \cite{AhlforsBers}.
    
    We can check directly from the integral representations of $\partial_\vare \Nonlinearity\tilde \psi^{\vare\tilde\nu_t}(0)$ and $\partial_\vare \Schwarzian\tilde\psi^{\vare\tilde\nu_t}(0)$ that they are continuously differentiable in $t$. If $\lambda_t$ is absolutely continuous, then we have from \eqref{eq:driving-function-perturbed} that for almost every $t$,
    \begin{equation*}\begin{aligned}
        \frac{\partial \dot \lambda_t^\enu}{\partial \vare} \bigg|_{\vare=0} &= -\frac{1}{2}\frac{\partial^2 (\Nonlinearity \tilde \psi^{\vare\tilde\nu_t}(0))}{\partial \vare \partial t} \bigg|_{\vare=0} = -\frac{1}{2} \frac{\partial^2 (\Nonlinearity \tilde \psi^{\vare\tilde \nu_t}(0))}{\partial t \partial \vare}\bigg|_{\vare=0} \\& = -\frac{2}{\pi} \,\mathrm{Re}  \int_{\m{H}} \nu(z) \partial_t \bigg(\frac{f_t'(z)^2}{f_t(z)} \bigg) \,\dd^2 z .
        \end{aligned}
    \end{equation*}
    Similarly, \eqref{eq:capacity-perturubed} implies
    \begin{equation*}
        \frac{\partial \dot a_t^\enu}{\partial \vare} \bigg|_{\vare=0} = \frac{1}{\pi} \,\mathrm{Re} \int_{\m{H}} \nu(z) \,\partial_t \big(f_t'(z)^2\big) \,\dd^2 z.
    \end{equation*}

    Using the formulas for $\partial_t f_t$ and $\partial_t f_t'$ above, we have 
    \begin{equation*}
        \partial_t \big(f_t'(z)^2\big) = -4\frac{f_t'(z)^2}{f_t(z)^2} \quad \text{and} \quad \partial_t \,\bigg(\frac{f_t'(z)^2}{f_t(z)}\bigg) = -6\frac{f_t'(z)^2}{f_t(z)^3} + \dot \lambda_t \frac{f_t'(z)^2}{f_t(z)^2}.
    \end{equation*}
   This completes the proof.
\end{proof}

Proposition~\ref{prop:d/dt lambda-variation} allows us to compute the first variation of the chordal Loewner energy for a finite portion of the chord $\eta^\enu$.

\begin{cor} \label{cor:chord-LoewnerE}
    Let $T\in (0,T_+)$. Suppose $\lambda_t$ is absolutely continuous on $[0,T]$ and $\dot \lambda_t \in L^2([0,T])$. Then,
    \begin{equation}\label{eq:chordal-energy-variation}
            \frac{\partial}{\partial \vare} \bigg|_{\vare=0} I^C(\eta^\enu(0,T])  = \frac{12}{\pi} \,\mathrm{Re} \int_{\m{H}} \nu(z) \int_0^T  \dot \lambda_t  \frac{f_t'(z)^2}{f_t(z)^3}\,\dd t\,\dd^2 z.
    \end{equation}
\end{cor}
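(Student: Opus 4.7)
The plan is to reduce the variation of the energy to the variations of $\dot\lambda_t^\enu$ and $\dot a_t^\enu$ already computed in Proposition~\ref{prop:d/dt lambda-variation}, and then observe that a term cancels.

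First, I would rewrite $I^L(\eta^\enu[0,T])$ in the parametrization inherited from $\eta$ rather than in the half-plane capacity parametrization of $\eta^\enu$. Since $\eta^\enu[0,t]$ has half-plane capacity $2a_t^\enu$ and driving function $\lambda_t^\enu$ in the $t$-variable, the change of variables $s = a_t^\enu$ (which is strictly increasing in $t$ because $\eta^\enu$ is a simple chord, and $C^1$ in $t$ by Proposition~\ref{prop:d/dt lambda-variation}) gives
\begin{equation*}
    I^L(\eta^\enu[0,T]) = \frac{1}{2}\int_0^T \frac{(\dot\lambda_t^\enu)^2}{\dot a_t^\enu}\,\dd t.
\end{equation*}
At $\vare = 0$ we have $\lambda_t^0 = \lambda_t$ and $a_t^0 = t$, so $\dot a_t^0 = 1$, and the hypothesis $\dot\lambda \in L^2([0,T])$ makes the integrand at $\vare = 0$ integrable.

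Next I would differentiate in $\vare$ under the integral sign (the derivatives are continuous in $t$ by Proposition~\ref{prop:d/dt lambda-variation}, and for $|\vare|$ small the integrand is uniformly bounded by a multiple of $1+\dot\lambda_t^2$, which is in $L^1([0,T])$ by assumption, so dominated convergence applies). This yields
\begin{equation*}
    \frac{\partial}{\partial\vare}\bigg|_{\vare=0} I^L(\eta^\enu[0,T]) = \int_0^T \dot\lambda_t\,\frac{\partial \dot\lambda_t^\enu}{\partial\vare}\bigg|_{\vare=0}\,\dd t - \frac{1}{2}\int_0^T \dot\lambda_t^2\,\frac{\partial \dot a_t^\enu}{\partial\vare}\bigg|_{\vare=0}\,\dd t.
\end{equation*}

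Now I would plug in the two formulas from Proposition~\ref{prop:d/dt lambda-variation} and apply Fubini to swap the time and area integrals. The integrand in the first term splits as $12\,\dot\lambda_t\,f_t'(z)^2/f_t(z)^3 - 2\,\dot\lambda_t^2\, f_t'(z)^2/f_t(z)^2$, while the second term contributes $+2\,\dot\lambda_t^2\, f_t'(z)^2/f_t(z)^2$. The two $f_t'(z)^2/f_t(z)^2$ contributions cancel exactly, leaving
\begin{equation*}
    \frac{12}{\pi}\,\mathrm{Re}\int_{\m H}\nu(z)\int_0^T \dot\lambda_t\,\frac{f_t'(z)^2}{f_t(z)^3}\,\dd t\,\dd^2 z,
\end{equation*}
which is the claimed expression.

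The only genuinely technical points are (i) justifying that $a_t^\enu$ is smooth enough in $t$ to perform the change of variables and that $\eta^\enu[0,T]$ is the correct portion under the new parametrization, and (ii) verifying the hypotheses for differentiation under the integral and for Fubini's theorem. For (i), the regularity of $a_t^\enu$ comes from Proposition~\ref{prop:d/dt lambda-variation}. For (ii), the main point is a uniform $L^\infty_z \cdot L^1_t$ bound on the integrands, which is available because $\nu$ has compact support away from $\eta$ and the functions $f_t'(z)^2/f_t(z)^k$ are uniformly bounded on $\mathrm{supp}(\nu)$ for $t \in [0,T]$ (since $f_t$ stays bounded away from $0$ and $\infty$ there). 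The cancellation in the last step is the pleasant surprise that makes the final formula so clean.
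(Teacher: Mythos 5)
Your proposal is correct and follows essentially the same route as the paper: rewrite $I^L(\eta^\enu[0,T])$ as $\frac{1}{2}\int_0^T (\dot\lambda_t^\enu)^2/\dot a_t^\enu\,\dd t$, differentiate in $\vare$ under the time integral using the regularity from Proposition~\ref{prop:d/dt lambda-variation}, and observe that the two $f_t'(z)^2/f_t(z)^2$ contributions cancel, leaving exactly \eqref{eq:chordal-energy-variation}. The constants check out, and your justification of the interchange of derivative and integral is at the same level of detail as the paper's appeal to the Leibniz integral rule.
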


\begin{proof}
    From Proposition~\ref{prop:d/dt lambda-variation}, we see that $(\dot \lambda_t^\enu)^2/\dot a_t^\enu$ is integrable on $[0,T]$ whenever $|\vare|<1/\|\nu\|_\infty$. 
    Moreover, we obtain that for almost every $t\in[0,T]$,
    \begin{equation*}
        \frac{\partial}{\partial \vare} \bigg|_{\vare=0} \frac{(\dot \lambda_t^\enu)^2}{\dot a_t^\enu}  = 2\dot \lambda_t \frac{\partial \dot \lambda_t^\enu}{\partial \vare} \bigg|_{\vare=0} - \dot \lambda_t^2 \frac{\partial \dot a_t^\enu}{\partial \vare} \bigg|_{\vare=0} = \frac{24\dot \lambda_t }{\pi} \,\mathrm{Re} \int_{\m{H}} \nu(z) \frac{f_t'(z)^2}{f_t(z)^3}  \,\dd^2 z.
    \end{equation*}
    Since $\nu$ is compactly supported in $\m{H}\smallsetminus\eta$, the integral on the right-hand side is continuous in $t$ and hence bounded on $[0,T]$. Using the Leibniz integral rule, we conclude that
    \begin{equation*}\begin{aligned}
        I^C(\eta^\enu(0,T]) &= \frac{1}{2}\int_0^T \left|\frac{\dd \lambda_t^\enu}{\dd a_t^\enu}\right|^2 \dd a_t^\enu = \frac{1}{2} \int_0^T \frac{(\dot \lambda_t^\enu)^2}{\dot a_t^\enu}\,\dd t \\ 
        & = I^C(\eta[0,T]) + \vare \int_0^T \frac{12\dot \lambda_t }{\pi} \,\mathrm{Re} \left[\int_{\m{H}} \nu(z) \frac{f_t'(z)^2}{f_t(z)^3}  \dd^2 z \right] \dd t + o(\vare)\end{aligned}
    \end{equation*}
    as $\vare\to 0$.
\end{proof}

 \section{Deformation of Weil--Petersson quasicircles}\label{sec:WP}

 In this section,   we consider the Loewner chain for a Jordan curve by conjugating the chordal Loewner chain by $z \mapsto z^2$. 
 This simple 
 operation  relates the integrand in  \eqref{eq:chordal-energy-variation} to a Schwarzian derivative (Proposition~\ref{prop:infinitesimal-loewner-energy}) which leads to the proof of Theorem~\ref{thm:intro_energy-derivative-schwarzian}.

\textit{Convention.} 
We take the branch of the complex square root function $\sqrt{z}$ (or $z^{1/2}$) on $\m C$ to be the one with the image in $\m{H} \cup \m{R}_+$.

\subsection{Loop driving function} \label{sec:driving}

We first recall the definition of the Loewner driving function for a Jordan curve. 
See Figure~\ref{fig:qc-loop} for an illustration of the maps used in Section~\ref{sec:WP}.

 \begin{figure}[ht]
 \centering\includegraphics[width=0.95\textwidth]{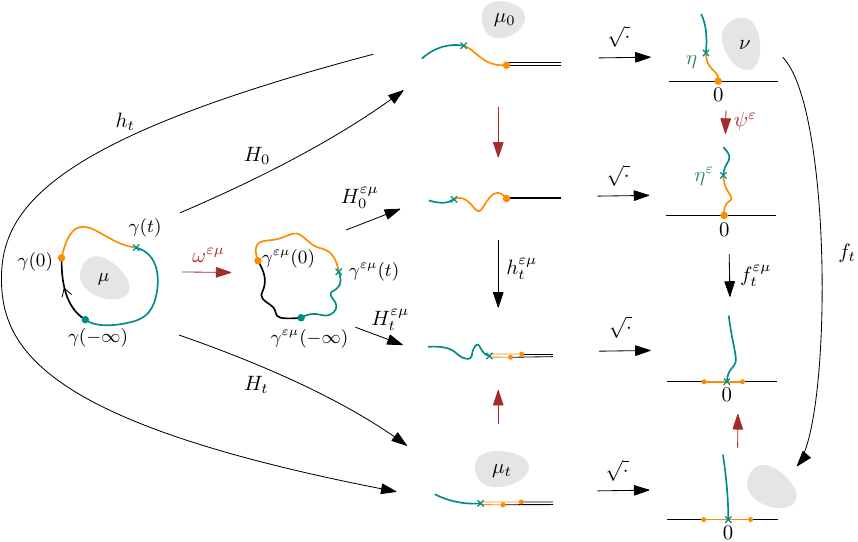}
    \caption{A commutative diagram illustrating the quasiconformal maps and related conformal mapping-out functions in Section~\ref{sec:WP}. The gray shaded areas denote the support of the Beltrami differentials. The arrows in red are quasiconformal maps, and those in black are conformal maps.}
    \label{fig:qc-loop}
\end{figure}

Let $\gamma:[-\infty,+\infty] \to \Chat$ be a Jordan curve where $\g (-\infty) = \g (+\infty)$. We choose a family of uniformizing maps $H_t: \Chat \smallsetminus \gamma[-\infty,t] \to \m{C}\smallsetminus \m{R}_+$ such that $H_t(\gamma(t)) = 0$ and $H_t(\gamma(+\infty)) = \infty$ for each $t\in\m{R}$. Note that each $H_t$ is unique up to a real multiplicative factor. We fix a consistent normalization such that for every $s<t$, $H_t \circ H_s^{-1}(z) = z + o(z)$ as $z\to \infty$. Note that it suffices to fix the map $H_0$ and then normalize $H_t$ for $t\neq 0$ so that $H_t \circ H_0^{-1}$ has the correct asymptotic behavior.  
This is possible since if we write 
\begin{equation}\label{eq:slit-plane-chain}
    h_t = H_t \circ H_0^{-1} \quad \text{and} \quad f_t(z) = \sqrt{h_t(z^2)},
\end{equation}
then $f_t$ is a conformal map taking a some neighborhood of $\infty$ in $\m{H}$ to another neighborhood of $\infty$ in $\m{H}$. Schwarz reflection applied to $f_t$ in a neighborhood of $\infty$ shows that $f_t$ is holomorphic at $\infty$. Normalizing $h_t$ as above is equivalent to normalizing $f_t$ such that  $f_t(z) = z + O(1)$ as $z\to \infty$. 

\begin{df}\label{df:loop_driving}
Define the \emph{\textnormal(loop\textnormal) driving function} $t\mapsto \lambda_t$ and the \emph{\textnormal(loop\textnormal) capacity function} $t\mapsto a_t$ from the expansion
\begin{equation}\label{eq:loop-f_t-def}
    f_t(z) = z - \lambda_t + \frac{2a_t}{z} + O\bigg( \frac{1}{|z|^2} \bigg) \quad \text{as }z\to \infty.
\end{equation}
    We always have $\l_0 = a_0 = 0$ and $t \mapsto a_t$ is continuous and strictly increasing. If $a_t \to \pm\infty$ when $t \to \pm \infty$, then we can reparametrize $\g$ such that $a_t = t$ for every $t\in \m{R}$. In this case, we say that $\g$ is \emph{capacity-parametrized} by $\m R$.\footnote{\label{footnote:capacity}Similar to the chordal case, a general Jordan curve $\g$ may not have infinite capacity at either end of the root. A priori, $a_t \to T_\pm$ as $t \to \pm \infty$ where $T_+ \in (0,+\infty]$ and $T_- \in [-\infty, 0)$. In other words, $\g$ may be capacity-parametrized by a strict sub-interval of $\m R$.  
    However, we show in Lemma~\ref{lem:total-capacity} that if $I^L(\g) < +\infty$ (see Definition~\ref{def:loop-driving-fn}), then the capacity parametrization of $\g$ must be defined on all of $\m R$. Throughout Section~\ref{sec:WP}, we assume that $\g$ is capacity-parametrized by $\m R$.} We remind the reader that the capacity parametrization and the corresponding driving function depend on the choices of the orientation of $\g$, $\g (\pm\infty), \g (0)$, and $H_0$. 
\end{df} 

The reader may wonder about the different behaviors of the map $f_t$  depending on the sign of $t$, which seem to give a different meaning to the term ``capacity.'' This difference is not fundamental as the designation of the point of zero capacity on $\g$ is artificial. We shall view the capacity given by $f_t$ as the ``relative capacity'' with respect to our choice of the part $\g[-\infty,0]$. 
Precisely, it means the following.
\begin{lem}\label{lem:loop-time-change}
Suppose $\g$ is a  Jordan curve, capacity-parametrized by $\m R$, with driving function $t \mapsto \l_t$. 
Then, for every $s \in \m R$, $\sqrt {H_{s}\circ \gamma(\cdot+s)}$ defined on $\m R_+$ is a simple chord in $(\m H; 0, \infty)$ parametrized by capacity. 
Moreover, its chordal driving function is given by $\l_{\cdot + s} - \l_{s}$. 
\end{lem}
\begin{proof}
When $s = 0$,  $\eta (\cdot) = \sqrt{H_0\circ\gamma(\cdot)}$ is a simple chord in $\m{H}$ from $0$ to $\infty$. For $t >0$, the conformal map $h_t$ takes $H_0(\Chat\smallsetminus\gamma[-\infty,t])$ onto $\m{C}\smallsetminus \m{R}_+$.
Hence, $f_t$ is a conformal map from $\m{H}\smallsetminus\eta(0,t]$ onto $\m{H}$. Therefore, when $t>0$, \eqref{eq:loop-f_t-def} simply means that $2a_t = 2t$ is the half-plane capacity of $\eta[0,t]$ and $(\lambda_t)_{t\geq 0}$ is the chordal driving function of $\eta$ by \eqref{eq:f_eq}.

 For general $s \in \m R$, we note that $(f_{t+ s} \circ f_{s}^{-1})_{t\geq 0}$ is a centered Loewner chain associated with the curve $\sqrt{{H_{s} \circ \gamma(\cdot + s)}}$. Moreover, \eqref{eq:loop-f_t-def} implies
\begin{equation*}
    f_{t+s} \circ f_{s}^{-1}(z) = z - (\lambda_{t+s} - \lambda_{s}) + \frac{2(a_{t+s}-a_{s})}{z} + O\bigg(\frac{1}{z^2}\bigg) \quad \text{as } z\to\infty
\end{equation*}
for all $t\geq 0$. Thus, $t\mapsto \lambda_{t+s} - \lambda_{s}$ and $t\mapsto a_{t+s} - a_{s}$ are respectively the driving function and the capacity function corresponding to the chain $(f_{t+s} \circ f_{s}^{-1})_{t\geq 0}$. In particular, the assumption $a_t = t$ for all $t\in\m{R}$ means that the chain $(f_{t+s} \circ f_{s}^{-1})_{t\geq 0}$ is also in capacity parametrization.
\end{proof}

\begin{remark} The loop driving function generalizes the chordal Loewner driving function. If $\eta$ is a simple chord in $(\m H; 0, \infty)$ with driving function $\l: \m R_+ \to \m R$, then the Jordan curve $\g := \eta(\cdot)^2 \cup \m R_+$ with the same orientation as $\eta$ (from $0$ to $\infty$), root $\infty$, $\g (0) = 0$, and $H_0 (z) = z$, has the driving function $(\tilde \l_t)_{t\in\m{R}}$ where $\tilde \l_t  = \l_t$ if $t \ge 0$ and $\tilde \l_t = 0$ if $t \le 0$.
 \end{remark}

\begin{remark}\label{rem:mobius_inv_dr}
    Let $\g$ be a Jordan curve capacity-parametrized by $\m R$ using the conformal map $H_0$. If $A : \Chat \to \Chat$ is a M\"obius transformation, then the loop $t\mapsto \tilde \g(t) : =  A (\g(t))$ has the same driving function as $\g$ when we choose its capacity parametrization using the conformal map $\tilde H_0 = H_0\circ A^{-1}$. Moreover, the conformal maps corresponding to $H_t$ and $h_t$ for $\tilde\g$ are $\tilde H_t = H_t \circ A^{-1}$ and $\tilde h_t = \tilde H_t \circ \tilde H_0^{-1} =  H_t \circ H_0^{-1} = h_t$. Hence, the map $f_t$ remains unchanged, and so are the capacity and driving functions. 
\end{remark}

\begin{df} \label{def:loop-driving-fn}
The \emph{Loewner energy} of a Jordan curve $\g$ is 
$$I^L(\g) = \frac 12 \int_{-\infty}^{+\infty} \dot \lambda_t ^2 \,\dd t,$$
where $(\l_t)_{t\in \m R}$ is the driving function of $\g$   described above. See also Lemma~\ref{lem:total-capacity}. Theorem~\ref{thm:intro_equiv} shows that this energy does not depend on the parametrization of the curve (but we will not use this fact in our proof).
\end{df}
The next corollary is immediate after Lemma~\ref{lem:loop-time-change}.
\begin{cor} \label{cor:partial_loop}
For all $s<t$, the partial chordal Loewner energy of $\g(s,t]$ in the simply connected domain $\Chat \smallsetminus \g[-\infty,s]$ 
is given by 
    $$I^C_{\Chat \smallsetminus \g[-\infty,s]} (\g(s,t]) =\frac 12 \int_{s}^{t} \dot \lambda_r ^2 \,\dd r$$
    where the slit domain $\Chat \smallsetminus \g[-\infty,s]$ is always understood with the two marked prime ends being the two ends of $\g[-\infty,s]$.
\end{cor}
\begin{proof}
It suffices to notice that  $\sqrt{H_{s} (\cdot)}$ maps $\Chat \smallsetminus \g[-\infty,s]$ conformally onto $\m H$. Hence, the partial chordal Loewner energy of $\g(s,t]$ in $\Chat \smallsetminus \g[-\infty,s]$ equals that of $\sqrt{H_{s} (\g (s,t])}$ in $\m{H}$, which has the driving function $r \mapsto \l_{s +r} - \l_{s}$ defined for $r \in [0,t-s]$ by Lemma~\ref{lem:loop-time-change}.
\end{proof}

\subsection{Variation of Loewner energy for a part of the quasicircle}

We now consider deformations of a Jordan curve $\gamma$. Let $\mu \in L^\infty(\Chat)$ be a complex-valued function with compact support in $\Chat\smallsetminus\gamma$. For $\vare \in \m{R}$ with $\|\vare \mu\|_{\infty} < 1$, let $\omega^\emu:\Chat\to\Chat$ be a quasiconformal homeomorphism which satisfies the Beltrami equation 
\begin{equation*}
    \partial_{\bar z} \omega^\emu = (\vare\mu) \partial_z \omega^\emu.
\end{equation*}

Denote the deformation of $\gamma$ under the quasiconformal map $\omega^\emu$ as  
\begin{equation*}
    \gamma^\emu = \omega^\emu\circ \gamma.
\end{equation*}
Again we choose a family of uniformizing maps $H_t^\emu:\Chat \smallsetminus \gamma^\emu[-\infty,t] \to \m{C}\smallsetminus \m{R}_+$ with $H_t^\emu(\gamma^\emu(t)) = 0$ and $H_t^\emu(\gamma^\emu(+\infty)) = \infty$, normalized so that $H_t^\emu \circ \omega^\emu \circ H_t^{-1} (z) = z + o(z)$ as $z\to\infty$ for each $t\in \m R$. 

We define analogously the chains $(h_t^\emu)_{t\in\m{R}}$ and $(f_t^\emu)_{t\in\m{R}}$, the driving function $(\lambda_t^\emu)_{t\in\m{R}}$, and the capacity function $(a_t^\emu)_{t\in\m{R}}$. That is, 
\begin{equation*}
    h_t^\emu = H_t^\emu \circ (H_0^\emu)^{-1} \quad \text{and} \quad f_t^\emu(z) = \sqrt{h_t^\emu(z^2)}.
\end{equation*}
Then, by our choice of normalization, $h_t^\emu(z) = z + o(z)$ and $f_t^\emu(z) = z + O(1)$ as $z\to \infty$. 
We define $\lambda_t^\emu$ and $a_t^\emu$ from the expansion
\begin{equation*}
    f_t^\emu(z) = z -\lambda_t^\emu + \frac{2a_t^\emu}{z} + O\bigg(\frac{1}{z^2}\bigg) \quad \text{as } z\to \infty.
\end{equation*}

\begin{remark}
The map $\omega^\emu$, and hence the Jordan curve $\gamma^\emu$, is unique only up to a post-composition by some M\"obius transformation. The choice of $\omega^\emu$ does not affect our analysis, because the first step in it is always to apply the appropriately normalized uniformizing map $H_t^\emu$ from $\Chat \smallsetminus \gamma^\emu[-\infty,t]$ onto $\m{C}\smallsetminus \m{R}_+$.
\end{remark}

In this subsection, we translate Corollary~\ref{cor:chord-LoewnerE} into an analogous formula for the Weil--Peterson curve $\gamma$ and its deformation $\gamma^\emu$. The following is the main result.

\begin{prop}\label{prop:infinitesimal-loewner-energy}
    Suppose $s<t$ and  $I^C_{\Chat \smallsetminus \gamma[-\infty,s]}(\gamma(s,t])<\infty$. Then,
    \begin{equation}\label{eq:infinitesimal-loewner-energy}
        \frac{\partial}{\partial \vare} \bigg|_{\vare=0} I^C_{\Chat \smallsetminus\gamma^\emu[-\infty,s]}(\gamma^\emu[s,t]) =  -\frac{4}{\pi}\, \mathrm{Re} \int_{\m{C}\smallsetminus\gamma} \mu(z) (\Schwarzian H_t(z)  - \Schwarzian H_s(z))\,\dd^2 z .
    \end{equation}
\end{prop}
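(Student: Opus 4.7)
My plan is to reduce the claim to the chordal variation formula in Corollary~\ref{cor:chord-LoewnerE} by uniformizing $\Chat \smallsetminus \g[-\infty,s]$ onto $\m H$ via $\sqrt{H_s(\cdot)}$. Set $\eta_s(\tau) := \sqrt{H_s(\gamma(s+\tau))}$ for $\tau \geq 0$, a simple chord in $(\m H;0,\infty)$. By Remark~\ref{rem:loop-time-change}, $f_\tau^s := f_{s+\tau}\circ f_s^{-1}$ is the centered chordal Loewner chain of $\eta_s$ with driving function $\tau\mapsto \lambda_{s+\tau}-\lambda_s$, which is absolutely continuous with square-integrable derivative by the hypothesis on $\gamma[s,t]$. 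From $f_t(\zeta)=\sqrt{h_t(\zeta^2)}$ one checks directly the key identity
\begin{equation*}
    f_\tau^s(\sqrt{H_s(z)}) = \sqrt{H_{s+\tau}(z)},
\end{equation*}
and in particular $I^L(\eta_s[0,t-s])=I^L_{\Chat\smallsetminus\gamma[-\infty,s]}(\gamma[s,t])$ by Remark~\ref{rem:partial_loop}.

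Next I identify the chordal deformation induced by $\omega^\emu$. Set $\Psi_s^\emu := \sqrt{H_s^\emu(\cdot)} \circ \omega^\emu \circ (\sqrt{H_s})^{-1}$, a quasiconformal self-map of $\m H$ sending $\eta_s$ to its analogue $\sqrt{H_s^\emu \circ \gamma^\emu(s+\cdot)}$. Since $\sqrt{H_s^\emu}$ and $(\sqrt{H_s})^{-1}$ are conformal, $\Psi_s^\emu$ has Beltrami coefficient $\vare \nu_s$ with $\nu_s$ the pullback of $\mu$ by $(\sqrt{H_s})^{-1}$. The normalizations of $\omega^\emu$ and $H_s^\emu$ combined with the squaring trick force $\Psi_s^\emu(0)=0$ and $\Psi_s^\emu(z)=z+O(1/z)$ as $z\to\infty$, so by uniqueness in the measurable Riemann mapping theorem $\Psi_s^\emu$ coincides with the normalized map $\psi^{\vare\nu_s}$ of Theorem~\ref{thm:intro_driving_function}, and Corollary~\ref{cor:chord-LoewnerE} applies with $T=t-s$. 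A change of variables $\zeta=\sqrt{H_s(z)}$ makes $\nu_s(\zeta)\,\dd^2\zeta$ equal to $\mu(z)((\sqrt{H_s})'(z))^2\,\dd^2 z$ pointwise, and combining this with the identity above and $((\sqrt{H_r})'(z))^2 = H_r'(z)^2/(4H_r(z))$ yields
\begin{equation*}
    \frac{\partial}{\partial\vare}\bigg|_{\vare=0} I^L_{\Chat\smallsetminus\gamma^\emu[-\infty,s]}(\gamma^\emu[s,t]) = \frac{3}{\pi}\,\mathrm{Re}\int_{\m C\smallsetminus\gamma}\mu(z)\int_s^t \dot\lambda_r \frac{H_r'(z)^2}{H_r(z)^{5/2}}\,\dd r\,\dd^2 z.
\end{equation*}

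The crux of the argument is then the pointwise identity
\begin{equation*}
    \partial_r \Schwarzian H_r(z) = -\frac{3\dot\lambda_r}{4}\,\frac{H_r'(z)^2}{H_r(z)^{5/2}},
\end{equation*}
whose integration from $s$ to $t$ replaces the inner time integral by $-\frac{4}{3}(\Schwarzian H_t(z) - \Schwarzian H_s(z))$ and produces the constant $-4/\pi$ in the statement. To prove it I derive the ``loop Loewner equation'' $\partial_r H_r(z) = 4 - 2\dot\lambda_r\sqrt{H_r(z)}$ by differentiating $\sqrt{H_{s+\tau}(z)} = f_\tau^s(\sqrt{H_s(z)})$ in $\tau$ and invoking the chordal Loewner equation for $f_\tau^s$. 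Then I decompose $\Schwarzian H_r(z) = -3 H_r'(z)^2/(8 H_r(z)^2) + \Schwarzian\sqrt{H_r}(z)$ using $H_r=(\sqrt{H_r})^2$ and $\Schwarzian[w\mapsto w^2]=-3/(2w^2)$; the $r$-derivative of the second summand equals $-3 H_r'(z)^2/H_r(z)^3$ by the classical chordal identity $\partial_\tau \Schwarzian g_\tau = -12(g_\tau')^2/(g_\tau - \lambda_\tau)^4$ applied to $f_\tau^s$, and the $\dot\lambda$-independent contributions from the two summands cancel exactly. The main obstacle is the bookkeeping in this Schwarzian computation and in the change of variables; the only conceptual inputs beyond Corollary~\ref{cor:chord-LoewnerE} are the squaring identification $\sqrt{H_{s+\tau}}=f_\tau^s\circ\sqrt{H_s}$ and the classical formula for $\partial_\tau\Schwarzian g_\tau$ along a chordal Loewner chain.
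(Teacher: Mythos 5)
Your proposal is correct and follows essentially the same route as the paper: uniformize via $\sqrt{H_s(\cdot)}$, apply Corollary~\ref{cor:chord-LoewnerE} to the induced chordal deformation, change variables back, and convert the time integral of $\dot\lambda_r H_r'(z)^2/H_r(z)^{5/2}$ into $-\tfrac{4}{3}(\Schwarzian H_t-\Schwarzian H_s)$ via the pointwise identity $\partial_r\Schwarzian H_r=-\tfrac{3\dot\lambda_r}{4}H_r'^2/H_r^{5/2}$, which is exactly the paper's Lemma~\ref{lem:d/dt-Sh_t}. The only cosmetic differences are that you work at general $s$ directly rather than reducing to $s=0$ by translation, and you derive the Schwarzian identity by splitting off $\Schwarzian[w\mapsto w^2]$ and using $\partial_\tau\Schwarzian g_\tau=-12(g_\tau')^2/(g_\tau-\lambda_\tau)^4$, whereas the paper computes $(4-2\dot\lambda_t z^{1/2})'''$ from the same loop Loewner equation; both computations are correct.
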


The following lemma is a straightforward calculation used in the proof of Proposition~\ref{prop:infinitesimal-loewner-energy}.

\begin{lem}\label{lem:d/dt-Sh_t}
    Suppose $\lambda_t$ is absolutely continuous. Then, for each $z \in H_0(\Chat\smallsetminus\gamma)$, the Schwarzian $\Schwarzian h_t(z)$ is absolutely continuous in $t$. Moreover, for almost every $t$,
    \begin{equation}
        \frac{\partial}{\partial t} \Schwarzian h_t(z) = -\frac{3 h_t'(z)^2}{4 h_t(z)^{5/2}} \dot \lambda_t. 
    \end{equation}
\end{lem}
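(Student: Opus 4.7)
The plan is to leverage the identity $h_t(w) = f_t(\sqrt{w})^2$ from Section~\ref{sec:driving} together with the chordal Loewner equation $\partial_t f_t(z) = 2/f_t(z) - \dot\lambda_t$ satisfied by the centered chain $f_t$. First, I would apply the chain rule for the Schwarzian to the decomposition $h_t = P \circ f_t \circ P^{-1}$ with $P(z) = z^2$. Using the direct computations $\Schwarzian P(z) = -3/(2z^2)$ and $\Schwarzian P^{-1}(w) = 3/(8w^2)$, this gives
\[
\Schwarzian h_t(w) = \frac{\Schwarzian f_t(\sqrt{w})}{4w} - \frac{3\, f_t'(\sqrt{w})^2}{8 w\, h_t(w)} + \frac{3}{8 w^2}.
\]

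Next, I would differentiate this identity in $t$. Differentiating the Loewner equation in $z$ gives $\partial_t f_t'(z) = -2 f_t'(z)/f_t(z)^2$; iterating once more and combining the resulting formulas as in the definition of the Schwarzian yields the key algebraic identity
\[
\partial_t \Schwarzian f_t(z) = -\frac{12\, f_t'(z)^2}{f_t(z)^4}.
\]
On the other hand, $\partial_t h_t(w) = 2 f_t(\sqrt{w})\, \partial_t f_t(\sqrt{w}) = 4 - 2\dot\lambda_t\sqrt{h_t(w)}$ again by the Loewner equation. Substituting both into the expression for $\Schwarzian h_t(w)$, the terms not involving $\dot\lambda_t$ cancel and one is left with
\[
\partial_t \Schwarzian h_t(w) = -\frac{3\, \dot\lambda_t\, f_t'(\sqrt{w})^2}{4 w\, h_t(w)^{3/2}}.
\]
Differentiating $h_t(w) = f_t(\sqrt{w})^2$ in $w$ yields $h_t'(w) = \sqrt{h_t(w)}\, f_t'(\sqrt{w})/\sqrt{w}$, so $f_t'(\sqrt{w})^2/w = h_t'(w)^2/h_t(w)$, which reduces the right-hand side to $-3 \dot\lambda_t\, h_t'(w)^2 / (4\, h_t(w)^{5/2})$, matching the claim.

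For the absolute continuity assertion, I would argue that when $\lambda_t$ is absolutely continuous, standard results on the chordal Loewner ODE imply that for each $z$ away from the chord, $f_t(z)$ and all its spatial derivatives are absolutely continuous in $t$ with locally bounded $t$-derivatives. Hence $h_t(w) = f_t(\sqrt{w})^2$ and $\Schwarzian h_t(w)$ are also absolutely continuous in $t$, and the identity above holds for almost every $t$.

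The main obstacle is bookkeeping: correctly applying the Schwarzian chain rule through the branched map $z \mapsto z^2$, and verifying that the $\dot\lambda_t$-independent contributions indeed cancel in the final simplification. The strikingly clean form of the answer suggests there might be a more conceptual derivation via a radial-like Loewner equation for $h_t$ itself, but the route above is the most direct.
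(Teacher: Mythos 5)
Your proof is correct, and it gets to the identity by a genuinely different computation than the paper's. You conjugate spatially, writing $h_t = P \circ f_t \circ P^{-1}$ with $P(z) = z^2$, compute $\Schwarzian h_t$ in closed form via the chain rule (your constants $\Schwarzian P(z) = -3/(2z^2)$ and $\Schwarzian P^{-1}(w) = 3/(8w^2)$ are right), and then differentiate in $t$ using the Loewner equation together with the auxiliary identity $\partial_t \Schwarzian f_t(z) = -12 f_t'(z)^2/f_t(z)^4$; I verified that the $\dot\lambda_t$-free terms do cancel as you claim. The paper instead exploits the cocycle property in \emph{time}: from $\Schwarzian h_{t+u}(z) = \Schwarzian[h_{t+u}\circ h_t^{-1}](h_t(z))\, h_t'(z)^2 + \Schwarzian h_t(z)$ it reduces everything to the variation of $\Schwarzian \tilde h_u$ at $u=0$, where $\tilde h_0 = \mathrm{id}$; since the Schwarzian vanishes at the identity, its first variation there is just the third derivative of the velocity field, so the whole computation collapses to $(4 - 2\dot\lambda_t z^{1/2})''' = -3\dot\lambda_t/(4z^{5/2})$, with no Schwarzians of $P$, no formula for $\partial_t\Schwarzian f_t$, and no cancellation to check. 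This is the "more conceptual derivation" you suspected exists. Your route costs more bookkeeping but produces the explicit expression for $\Schwarzian h_t$ and the $\partial_t\Schwarzian f_t$ identity as byproducts. Two small points: your use of $\partial_t f_t(z) = 2/f_t(z) - \dot\lambda_t$ for \emph{all} $t\in\m{R}$ (not just $t\ge 0$) needs the observation that $(f_{s+t}\circ f_s^{-1})_{t\ge 0}$ is a centered chordal Loewner chain for every $s$ (the paper's Remark~\ref{rem:loop-time-change}), which is also exactly what the paper invokes for its $\tilde f_u$; and your sketch of the absolute-continuity claim is at the same level of detail as the paper's, so no gap there.
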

\begin{proof}
    From the relation 
    \begin{equation*}
        \Schwarzian h_{t+u}(z) = \Schwarzian[h_{t+u}\circ h_t^{-1}](h_t(z)) \cdot h_t'(z)^2 + \Schwarzian h_t(z),
    \end{equation*}
    we deduce 
    \begin{equation*}
        \frac{\partial}{\partial t} \Schwarzian h_t(z) = \frac{\partial}{\partial u} \bigg|_{u=0} (\Schwarzian h_{t+u}(z) - \Schwarzian h_t(z)) = h_t'(z)^2 \cdot \frac{\partial}{\partial u} \bigg|_{u=0} \Schwarzian [h_{t+u} \circ h_t^{-1}](h_t(z)).
    \end{equation*}
    Hence, it suffices to show that 
    \begin{equation}\label{eq:d/dt-Sh_t-difference}
        \frac{\partial}{\partial u}\bigg|_{u=0} \Schwarzian [h_{t+u} \circ h_t^{-1}](h_t(z)) = -\frac{3\dot \lambda_t}{4h_t(z)^{5/2}}.
    \end{equation}

    To see this, note that $\tilde f_u:= f_{t+u} \circ f_t^{-1}$ solves the Loewner equation   (see Lemma~\ref{lem:loop-time-change})
    \begin{equation*}
        \partial_u \tilde f_u(z) = \frac{2}{\tilde f_u(z)} - \dot \lambda_{t+u}.
    \end{equation*}
    Since $\tilde h_u(z):= h_{t+u} \circ h_t^{-1}(z) = \tilde f_u(\sqrt{z})^2 $, we have 
    \begin{equation*}
        \partial_u \tilde h_u(z) = 2\tilde f_u(\sqrt{z})\partial_u \tilde f_u(\sqrt{z}) = 4 - 2 \dot \lambda_{t+u} \tilde h_u(z)^{1/2}.
    \end{equation*}
    Then, because $\tilde h_0(z) = z$,
    \begin{equation}\label{eq:d/dt-Sh_t-t=0}
        \frac{\partial (\Schwarzian \tilde h_u(z))}{\partial u}\bigg|_{u=0} = \big(\partial_u \tilde h_u(z)\big|_{u=0}\big)''' = (4- 2\dot \lambda_t z^{1/2})''' = -\frac{3\dot \lambda_t}{4z^{5/2}}.
    \end{equation}
    Replacing $z$ in \eqref{eq:d/dt-Sh_t-t=0} with $h_t(z)$, we obtain \eqref{eq:d/dt-Sh_t-difference}. This completes the proof.
\end{proof}

\begin{proof}[Proof of Proposition~\ref{prop:infinitesimal-loewner-energy}]
    Let us first consider the case $s=0$. Letting $\eta(t) = \sqrt{H_0 \circ \gamma(t)}$ and $\eta^\vare(t) = \sqrt{H_0^\emu \circ \gamma^\emu(t)} = \sqrt{H_0^\emu \circ \omega^\emu \circ \gamma(t)}$, we have $\eta^\vare = \psi^\vare \circ \eta$ where $\psi^\vare(z) = \sqrt{(H_0^\emu \circ \omega^\emu \circ H_0^{-1})(z^2)}$. Let $\enu$ be the Beltrami coefficient corresponding to the quasiconformal map $\psi^\vare:\m{H}\to\m{H}$. Let $\vare \mu_0$ denote the Beltrami coefficient of $H_0^\emu \circ \omega^\emu \circ H_0^{-1}$. Then, $\nu(\zeta) = \mu_0(\zeta^2)(|\zeta|^2/\zeta^2)$.

    Substituting this $\nu$ into \eqref{eq:chordal-energy-variation} and letting $\zeta = \sqrt{z}$, since $f_t(\zeta) = \sqrt{h_t(z)}$ and $f_t'(\zeta)/\zeta = h_t'(z) /\sqrt{h_t(z)}$, we get
    \begin{equation*}
    \begin{aligned}
        \frac{\partial}{\partial \vare}\bigg|_{\vare=0} I^C(\eta^\vare(0,T])  &= \frac{12}{\pi} \,\mathrm{Re}  \int_{\m{H}} \nu(\zeta) \int_0^T \dot \lambda_t \frac{f_t'(\zeta)^2}{f_t(\zeta)^3} \dd t \, \dd^2 \zeta   \\
        &= \frac{3}{\pi} \,\mathrm{Re}  \int_{\m{C}\smallsetminus \m{R}_+} \mu_0(z) \int_0^T \dot \lambda_t \frac{h_t'(z)^2}{h_t(z)^{5/2}} \dd t \, \dd^2 z.
    \end{aligned}
    \end{equation*}
    Applying Lemma~\ref{lem:d/dt-Sh_t}, we have 
    \begin{equation*}
    \begin{aligned}
        \frac{\partial}{\partial \vare}\bigg|_{\vare=0} I^C(\eta^\vare(0,T])  &= - \frac{4}{\pi} \, \mathrm{Re} \int_{\m{C}\smallsetminus\m{R}_+}  \mu_0(z) \int_0^T \frac{\partial (\Schwarzian h_t(z))}{\partial t}\, \dd t \, \dd^2 z  \\
        &= - \frac{4}{\pi}\, \mathrm{Re} \int_{\m{C}\smallsetminus\m{R}_+} \mu_0(z) \Schwarzian h_T(z)\, \dd^2 z .
    \end{aligned}
    \end{equation*}
    Recalling our definition of $\vare \mu_0$, we have $\mu_0(H_0(z)) = \mu(z) (H_0'(z)^2/|H_0'(z)|^2)$. Moreover, from $H_T = h_T \circ H_0$, we have 
    $$\Schwarzian h_T(H_0(z)) \cdot H_0'(z)^2 = \Schwarzian H_T(z) - \Schwarzian H_0(z).$$ Hence,
    \begin{equation*}
        \int_{\m{C}\smallsetminus\m{R}_+} \vare \mu_0(z) \Schwarzian h_T(z)\, \dd^2 z = \int_{\m{C}\smallsetminus \gamma} \vare \mu(z) (\Schwarzian H_T(z) - \Schwarzian H_0(z))\, \dd^2 z.
    \end{equation*}
    Therefore, the case $s=0$ holds. In fact, this implies \eqref{eq:infinitesimal-loewner-energy} for any $s\in \m{R}$ because the parametrization of $\gamma$ is arbitrary up to translations as discussed in Lemma~\ref{lem:loop-time-change}.
\end{proof}

\subsection{Variation of the loop Loewner energy}

The goal of this subsection is to prove Theorem~\ref{thm:intro_energy-derivative-schwarzian}.
Let $H_{+\infty} :\Chat \smallsetminus \gamma \to \m{C}\smallsetminus\m{R}$ be any conformal map which maps $\O \to \m H$ and $\O^* \to \m H^*$.
Note that the map $H_{+\infty}$ restricted to $\O$ coincides with $f^{-1}$ (as in Theorem~\ref{thm:intro_energy-derivative-schwarzian}) post-composed by a M\"obius transformation, so $\mc S H_{+\infty}|_{\O} = \mc S [f^{-1}]$. Similarly, $\mc S H_{+\infty}|_{\O^*} = \mc S [g^{-1}]$.
In view of Corollary~\ref{cor:partial_loop} and  Proposition~\ref{prop:infinitesimal-loewner-energy}, it suffices to show that as $s\to -\infty$ and $t\to+\infty$, we have
\begin{equation}\label{eq:energy-limit}
    \int_{\m{C}\smallsetminus\gamma} \mu(z) (\Schwarzian H_t(z) - \Schwarzian H_s(z))\, \dd^2 z \to \int_{\m{C}\smallsetminus\gamma} \mu(z) \Schwarzian H_{+\infty} (z)\, \dd^2 z,
\end{equation}
and  
\begin{equation}\label{eq:energy-limit-2}
  \frac{\dd}{\dd \vare} \bigg|_{\vare=0}  \int_s^t \left|\frac{\dd \lambda_r^\emu}{\dd a_r^\emu}\right|^2 \dd a_r^\emu \to  \frac{\dd}{\dd \vare}  \bigg|_{\vare=0} \int_{-\infty}^{+\infty} \left|\frac{\dd \lambda_r^\emu}{\dd a_r^\emu}\right|^2 \dd a_r^\emu .
\end{equation}

For this, we need a few lemmas.
\begin{lem}\label{lem:Schwarzian-negative-limit}
    Suppose $\gamma(\pm\infty) = \infty$. Then, $\Schwarzian H_s \to 0$ locally uniformly as $s\to -\infty$.
\end{lem}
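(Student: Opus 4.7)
The plan is to combine the Kraus--Nehari universal bound on the Schwarzian of a univalent function with the observation that the domains $\Omega_s := \Chat \smallsetminus \gamma[-\infty, s]$ exhaust $\m{C}$ as $s \to -\infty$, forcing the hyperbolic density (which dominates the Schwarzian) to vanish locally uniformly.

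First I would observe that, since $\gamma : [-\infty, \infty] \to \Chat$ is continuous with $\gamma(-\infty) = \infty$, the continua $\gamma[-\infty, s]$ shrink (in the spherical metric) to the single point $\{\infty\}$ as $s \to -\infty$. Consequently, the simply connected domains $\Omega_s$ form a nested family, increasing as $s$ decreases, with
\[
\bigcup_{s \in \m{R}} \Omega_s \;=\; \Chat \smallsetminus \{\infty\} \;=\; \m{C}.
\]
In particular, $\Omega_s$ converges in the Carath\'eodory kernel sense to $\m{C}$ with respect to any base point.

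Next I would invoke the Kraus--Nehari theorem in its conformally invariant form: any univalent map $H$ on a simply connected hyperbolic domain $\Omega \subsetneq \Chat$ satisfies $|\Schwarzian H(z)| \leq C \, \rho_\Omega(z)^2$ for an absolute constant $C$, where $\rho_\Omega$ denotes the hyperbolic (Poincar\'e) density on $\Omega$. Applied to the univalent map $H_s$ on $\Omega_s$, this yields $|\Schwarzian H_s(z)| \leq C \, \rho_{\Omega_s}(z)^2$, so that it suffices to show $\rho_{\Omega_s} \to 0$ locally uniformly on $\m{C}$.

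For the final step, monotonicity of the hyperbolic density with respect to the domain gives that $\rho_{\Omega_s}(z)$ decreases pointwise to $0$ as $s \to -\infty$, since the kernel limit $\m{C}$ is parabolic and carries no hyperbolic metric (equivalently, the conformal radius of $\Omega_s$ at every point of $\m{C}$ tends to infinity). Each $\rho_{\Omega_s}$ being continuous on $\Omega_s$, Dini's theorem upgrades this monotone pointwise convergence to locally uniform convergence on $\m{C}$; combined with the Kraus--Nehari bound this gives $\Schwarzian H_s \to 0$ locally uniformly. The only mildly technical step is this last upgrade from pointwise to locally uniform decay of $\rho_{\Omega_s}$, handled cleanly by Dini via the monotone exhaustion of $\m{C}$ by the $\Omega_s$.
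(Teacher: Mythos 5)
Your proof is correct and rests on the same key ingredient as the paper's: the Kraus--Nehari bound combined with the fact that $\gamma[-\infty,s]$ escapes every compact subset of $\m{C}$ as $s\to-\infty$. The paper reaches the conclusion slightly more directly by applying the Nehari bound to $H_s$ restricted to a large round disk $R\m{D}\subset\Omega_s$, which bypasses the hyperbolic-density monotonicity and Dini's theorem steps, but the argument is essentially identical.
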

\begin{proof}
    Given any $R>0$, there exists a large negative $s_R$ such that for $s\leq s_R$, we have $\gamma[-\infty,s] \cap \{z: |z| < R\} = \varnothing$. Then, $H_s$ is conformal on $R\m{D}$. By the Nehari bound, 
    $$|\Schwarzian H_s(z)| \leq \frac{6}{\left(R(1-|z|^2/R^2)\right)^2}$$ for every $z\in R\m{D}$. The right-hand side of the inequality tends uniformly to 0 as $R\to +\infty$ on any compact subset of $\m{C}$.
\end{proof}

\begin{lem}\label{lem:Schwarzian-positive-limit}
    Suppose $\gamma(\pm \infty) = \infty$. Then, $\Schwarzian H_t \to \Schwarzian H_{+\infty}$ locally uniformly on $\m{C} \smallsetminus\gamma$ as $t\to+\infty$.
\end{lem}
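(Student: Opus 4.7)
Since $H_t = h_t \circ H_0$ with $H_0$ independent of $t$ and $h_t(w) = f_t(\sqrt{w})^2$, two applications of the chain rule $\Schwarzian(u\circ v) = (\Schwarzian u\circ v)(v')^2 + \Schwarzian v$ reduce the desired locally uniform convergence $\Schwarzian H_t \to \Schwarzian H_\infty$ on $\m C\smallsetminus\gamma$ to the locally uniform convergence $\Schwarzian f_t \to \Schwarzian f_\infty$ on $\m H\smallsetminus\eta[0,\infty)$. Here $\eta = \sqrt{H_0\circ\gamma}$ is a simple chord from $0$ to $\infty$ in $\m H$ (the endpoint $\eta(\infty)=\infty$ uses the hypothesis $\gamma(\infty)=\infty$), and $f_\infty$ denotes any conformal equivalence of each connected component of $\m H\smallsetminus\eta[0,\infty)$ onto $\m H$; its Schwarzian does not depend on this choice. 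I will prove the chordal statement by a M\"obius normalization followed by the Carath\'eodory kernel convergence theorem.

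Since the Schwarzian is invariant under post-composition by $\mathrm{Aut}(\m H)$, I will replace $f_t$ by $\tilde f_t := M_t \circ f_t$ with $\tilde f_t(z_\ast)=i$ and $\tilde f_t'(z_\ast)>0$ for a fixed reference point $z_\ast$ in one connected component $\Omega^+$ of $\m H\smallsetminus\eta[0,\infty)$. Each $\tilde f_t$ is then a normalized conformal equivalence $\m H\smallsetminus\eta[0,t] \to \m H$. By Schwarz--Pick, $\tilde f_t'(z_\ast) = 2\rho_{z_\ast}(\m H\smallsetminus\eta[0,t])$, where $\rho$ is the hyperbolic density; this quantity is bounded above by the monotone limit $\rho_{z_\ast}(\Omega^+)<\infty$ and below by $\rho_{z_\ast}(\m H)>0$. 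Since the $\tilde f_t$ take values in $\m H$ and are normalized at $z_\ast$, Montel's and Hurwitz's theorems together ensure $\{\tilde f_t\}$ is a normal family on $\Omega^+$ with every subsequential limit univalent.

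Next, I will apply the Carath\'eodory kernel theorem to identify the limit: the decreasing family of simply connected domains $\m H\smallsetminus\eta[0,t]$ has Carath\'eodory kernel $\Omega^+$ at $z_\ast$, since $\Omega^+ \subset \m H\smallsetminus\eta[0,t]$ for all $t$ while any strictly larger connected neighborhood of $z_\ast$ would need to contain points of $\eta(0,\infty)$ that are eventually removed. Hence $\tilde f_t \to \tilde f_\infty$ locally uniformly on $\Omega^+$, where $\tilde f_\infty:\Omega^+\to\m H$ is the normalized Riemann map, and the Weierstrass theorem then yields $\Schwarzian f_t = \Schwarzian \tilde f_t \to \Schwarzian \tilde f_\infty = \Schwarzian f_\infty$ locally uniformly on $\Omega^+$. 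Repeating the argument on the other component will complete the proof. The main technical subtlety I expect is the Carath\'eodory kernel computation itself, in particular confirming the ``pinching'' of the simply connected domains $\m H\smallsetminus\eta[0,t]$ into the two components of $\m H\smallsetminus\eta[0,\infty)$ as $t\to\infty$, so that the kernel-at-$z_\ast$ selects exactly the component $\Omega^+$.
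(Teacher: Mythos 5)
Your chordal Carath\'eodory argument is sound as far as it goes: the interior normalization $\tilde f_t = M_t\circ f_t$, the normality and derivative bounds, and the observation that the decreasing domains $\m H\smallsetminus\eta[0,t]$ have kernel $\Omega^+$ at $z_\ast$ are all correct, and they do give $\Schwarzian f_t=\Schwarzian \tilde f_t\to\Schwarzian \tilde f_\infty$ locally uniformly on $\Omega^+$. The gap is in your opening reduction. Writing $H_t=q\circ f_t\circ P$ with $q(\zeta)=\zeta^2$ and $P=\sqrt{H_0(\cdot)}$, the chain rule gives
\[
\Schwarzian H_t=\Bigl[\Schwarzian f_t-\tfrac32\bigl(f_t'/f_t\bigr)^2\Bigr]\circ P\cdot (P')^2+\Schwarzian P ,
\]
since $\Schwarzian q(\zeta)=-3/(2\zeta^2)\neq 0$. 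The extra term $-\tfrac32(f_t'/f_t)^2$ is not a function of $\Schwarzian f_t$, and it is not invariant under post-composition of $f_t$ by the automorphisms $M_t$ of $\m H$, over which your argument gives no control (they degenerate as $t\to\infty$). By contrast, the limit map $H_\infty$ restricted to a component of $\Chat\smallsetminus\gamma$ is a Riemann map onto a \emph{half-plane} and factors as $A\circ\tilde f_\infty\circ P$ with $A$ M\"obius and no $q$, so $\Schwarzian H_\infty=\Schwarzian\tilde f_\infty\circ P\cdot(P')^2+\Schwarzian P$. Hence the lemma is equivalent to your chordal statement \emph{plus} the claim that $(f_t'/f_t)^2\to 0$ locally uniformly on $\Omega^+$, which your plan never addresses. (A quick sanity check with $\gamma=\hat{\m R}$, where $f_t(\zeta)=\sqrt{\zeta^2+4t}$, shows that $\Schwarzian f_t$ and $-\tfrac32(f_t'/f_t)^2$ are two genuinely different quantities that must each be tracked.)

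The missing claim is true but not free. By Koebe distortion, $|f_t'(\zeta)|\asymp \Im f_t(\zeta)/\dist(\zeta,\partial(\m H\smallsetminus\eta[0,t]))$, so $|f_t'/f_t|\asymp \sin(\arg f_t(\zeta))/\dist(\zeta,\partial\Omega^+)$, and one must show $\arg f_t(\zeta)\to 0$ or $\pi$ for $\zeta\in\Omega^+$. Since $\frac1\pi\arg f_t(\zeta)$ is the harmonic measure, seen from $\zeta$ in $\m H\smallsetminus\eta[0,t]$, of the boundary lying beyond the tip $\eta(t)$, this is precisely the kind of harmonic-measure estimate that the paper's proof supplies (via the hyperbolic geodesic $\Gamma_t$ from $\gamma(t)$ to $\gamma(\infty)$ and the Beurling projection theorem). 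Indeed, the paper's cut along $\Gamma_t$ exists exactly so that all maps in play go onto a fixed half-plane and only $\Schwarzian$-preserving normalizations by automorphisms of $\m H$ are ever needed; your route through $\zeta\mapsto\zeta^2$ reintroduces the slit target $\m C\smallsetminus\m R_+$ and with it the problem that this device was designed to avoid. Supply the estimate $\arg f_t\to 0$ (or $\pi$) on each component and your proof closes.
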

\begin{proof}
    Choose either component of $\m{C} \smallsetminus \gamma$ and call it $U_{+\infty}$. 
    Let us denote by $\gamma^U(s)$ the prime end of $\gamma(s)$ as viewed from $U_{+\infty}$.
    
    Let $\gamma_t = \gamma[-\infty,t]$ and denote by $\gamma_t^U := \bigcup_{s\in(-\infty,t)}\gamma^U(s)$ the prime ends of $\gamma_t$ accessible from $U_{+\infty}$. Let $\Gamma_t$ be the hyperbolic geodesic in $\m{C} \smallsetminus \gamma_t$ connecting $\gamma(t)$ and $\gamma(+\infty)$. 
    Let $U_t$ be the component of $\m{C} \smallsetminus (\gamma_t \cup \Gamma_t)$ such that the prime ends of $\gamma_t$ as viewed from $U_t$ comprise $\gamma^U_t$. Observe that if $\mathrm{hm}(z,D;\cdot)$ is the harmonic measure on the domain $D$ as viewed from $z\in D$, then $z\in U_t$ if and only if $\mathrm{hm}(z,\m{C}\smallsetminus\gamma_t;\gamma_t^U) > 1/2$.

    We claim that if $z \in U_{+\infty}$, then $z \in \bigcup_{T\geq 0} \bigcap _{t\geq T} U_t$. 
    Suppose $z\in U_{+\infty}$. Choose a sufficiently small constant $a\in(0,1)$ such that $\mathrm{hm}(0,\m{D}\smallsetminus[a,1], [a,1]) > 1/2$. Since $\gamma(\pm\infty)=\infty$, for all sufficiently large $t>0$, we can find $R>0$ such that $\gamma_t \cap B_{aR}(z) \neq \varnothing$ but $\gamma(t,+\infty) \cap B_R(z) = \varnothing$. Then, 
    \begin{equation*}
        \mathrm{hm}(z,\m{C}\smallsetminus \gamma_t;\gamma_t^U) \geq \mathrm{hm}(z,B_R(z) \smallsetminus \gamma_t; \gamma_t^U) = \mathrm{hm}(z,B_R(z) \smallsetminus \gamma_t; \gamma_t).
    \end{equation*}
    By the Beurling projection theorem, 
    \begin{equation*}
        \mathrm{hm}(z,B_R(z) \smallsetminus \gamma_t, \gamma_t) \geq \mathrm{hm}(0, R\m{D} \smallsetminus [aR,R]; [aR,R]) = \mathrm{hm}(0,\m{D} \smallsetminus[a,1]; [a,1]) >1/2.
    \end{equation*}
    This completes the proof of the claim. 
    
    Note that $H_t$ is a conformal map which sends $\m{C} \smallsetminus (\gamma_t \cup \Gamma_t)$ onto $\m{C}\smallsetminus \m{R}$. 
    Then, by the Carath\'eodory kernel theorem, $H_t$ post-composed with an appropriate M\"obius transformation converges locally uniformly on $U_{+\infty} $ to $H_{+\infty}$ as $t\to\infty$. Consequently, $\Schwarzian H_t \to \Schwarzian H_{+\infty}$ locally uniformly on $U_{+\infty}$ as $t\to\infty$. An analogous argument applies to the other component of $\Chat\smallsetminus\gamma$.
\end{proof}

\begin{proof}[Proof of Theorem~\ref{thm:intro_energy-derivative-schwarzian}]
    Given $\gamma(\pm \infty) = \infty$, the limit \eqref{eq:energy-limit} follows by applying Lemmas~\ref{lem:Schwarzian-negative-limit} and \ref{lem:Schwarzian-positive-limit}. Otherwise, choose a M\"obius map $A:\Chat \to \Chat$ so that $A(\infty) = \gamma(\pm \infty)$. If $H_{+\infty} :\Chat \smallsetminus \gamma \to \m{C}\smallsetminus\m{R}$ is a conformal map as in the theorem statement, then $H_{+\infty} \circ A$ is a conformal map from $\Chat \smallsetminus \gamma$ onto $\m{C}\smallsetminus\m{R}$ with $\Schwarzian[H\circ A] = \Schwarzian H \cdot (A')^2$. The pullback $A^*\mu$ of $\mu$ under $\varphi$ satisfies $\mu(A(z)) = (A^*\mu)(z) (A'(z))^2/|A'(z)|^2$. Letting $\zeta = A(z)$, we have
    \begin{equation*}
        \int_{\Chat \smallsetminus \gamma} \mu(\zeta) \Schwarzian H_{+\infty} (\zeta) \,\dd^2 \zeta = \int_{\Chat \smallsetminus A^{-1}\circ\gamma} A^*\mu(z) \Schwarzian[H\circ A](z)\, \dd^2 z,
    \end{equation*}
    so we can consider the curve $A^{-1}\circ\gamma$ instead.
    
    To show \eqref{eq:energy-limit-2}, it suffices to prove that we can switch between the integral over $t\in (-\infty, +\infty)$ and the derivative in $\vare$. To this end, we prove that the following integral is absolutely convergent:
    \begin{equation}\label{eq:integrability-condition} \int_{-\infty}^{+\infty} \left| \frac{\partial}{\partial \vare} \bigg|_{\vare=0}  \frac{(\dot \lambda_t^\emu)^2}{\dot a_t^\emu}\right| \dd t < +\infty. \end{equation}
Recall from the proof of Corollary~\ref{cor:chord-LoewnerE} that for $t\geq 0$,
    \begin{equation} \label{eq:loop-energy-dt}
        \frac{\partial}{\partial \vare} \bigg|_{\vare=0}  \frac{(\dot \lambda_t^\emu)^2}{\dot a_t^\emu}= \frac{24 \dot\l_t}{\pi} \Re \int_{\m{H}} \nu(z)  \frac{f_t'(z)^2}{f_t(z)^3} \, \dd^2 z 
    \end{equation}
    where $\nu$ is the push-forward of the Beltrami differential $\mu$ under the map $\sqrt{H_0}$ as displayed in Figure~\ref{fig:qc-loop}. Using Lemma~\ref{lem:loop-time-change} and the composition rule \eqref{eq:nu_t-coord-change} for Beltrami differentials, it is straightforward to check that \eqref{eq:loop-energy-dt} is true for all $t\in \m{R}$.
    From Lemma~\ref{lem:appendix}, the assumption that $\int_{-\infty}^{+\infty} |\dot\l_t|^2\,\dd t <\infty$, and the Cauchy--Schwarz inequality, it follows that $\int_{-\infty}^{+\infty} |\dot \l_t \frac{f_t'(z)^2}{f_t(z)^3}|\,\dd t$ is finite and locally uniform in $z$.
    This implies \eqref{eq:integrability-condition} since $\nu$ is compactly supported and $\norm{\nu}_\infty < \infty$.
    Therefore, for any $-\infty \leq s \leq t \leq +\infty$,
    \begin{equation}
        \frac{\partial}{\partial \vare}  \bigg|_{\vare=0} \int_s^t \left|\frac{\dd \lambda_r^\emu}{\dd a_r^\emu}\right|^2 \dd a_r^\emu = \frac{\partial}{\partial \vare}  \bigg|_{\vare=0} \int_s^t \frac{(\dot \l_r^\emu)^2}{\dot a_r^\emu} \dd r = \int_s^t \frac{\partial}{\partial \vare}  \bigg|_{\vare=0} \frac{(\dot \l_r^\emu)^2}{\dot a_r^\emu} \dd r.
    \end{equation}
    Letting $s\to -\infty$ and $t\to +\infty$, by the dominated convergence theorem and \eqref{eq:integrability-condition}, we conclude \eqref{eq:energy-limit-2}. This proves Theorem~\ref{thm:intro_energy-derivative-schwarzian} as explained at the beginning of the subsection.
\end{proof}

\section{Remarks and open questions}\label{sec:comments}

While Loewner chains have been studied extensively due to their applications in the study of Schlicht functions and, more recently, in that of Schramm--Loewner evolution (SLE), their infinitesimal variations seem to have been overlooked. Our investigation of this topic is motivated by an effort to understand the large deviations of SLE at a deeper level. Roughly speaking, an SLE curve is a non-self-intersecting curve whose Loewner chain is driven by a constant multiple of Brownian motion. We refer the reader to
the textbooks~\cite{WW_St_Flour,Law05,Kemppainen_book,Beliaevs_book} for detailed introductions to SLE and its applications.

Theorem~\ref{thm:intro_equiv} implies that the Loewner energy is at the same time the action functional of an SLE loop \cite{W1,carfagnini_wang} and the K\"ahler potential on the Weil--Petersson Teichm\"uller space $T_0(1)$ \cite{W2,TT06}, thus building a bridge between two fundamentally different perspectives on the geometry of the space of Jordan curves in $\Chat$. On the SLE side, Jordan curves are viewed as dynamically growing slits, which are naturally described through the language of Loewner chains. The link from SLE to Loewner energy comes from stochastic analysis and the fact that the action functional of a Brownian motion is its Dirichlet energy. On the K\"ahler geometry side, there are no dynamics. Instead, all geometric structures are expressed infinitesimally on the tangent spaces of $T_0(1)$. 
The reason behind the fact that the action functional of SLE coincides with the K\"ahler potential of the unique homogeneous K\"ahler metric on $T_0(1)$ remains a mystery. See \cite{W_AMS} for an expository article on this link. 

Our motivation lies in building tools that can be used to reconcile these two distinct viewpoints. The current work serves as the first step in one of the possible directions to this end by elucidating which infinitesimal variations of the Loewner driving function correspond to those on $T_0(1)$. 
Natural questions going forward are how the complex structure, the symplectic form,  the Weil--Petersson metric, and the group structure on $T_0(1)$ are encoded in the Loewner driving function. Through these identifications, there is hope to build a more robust connection between random conformal geometry and Teichm\"uller theory and shed new light on their relationship.

There are yet other possible avenues in the study of quasiconformal deformations of SLE. For example, there is an interesting open question \cite[Conj.\ 7.1]{binder2023conformal} to identify the conformal dimension of SLE, which is the minimal Hausdorff dimension of the image of an SLE curve under quasiconformal mappings. The relevance of our work in this direction is that some of our results are sufficiently general to be applied in this context. For instance, the variational formula for the Loewner driving function (Theorem~\ref{thm:intro_driving_function}) does not assume any regularity on the driving function, so it can be applied even when the driving function is a Brownian motion. 
On the other hand, there is room for improvement in our results. The most obvious limitation is that we require the support of the Beltrami differential to be away from the curve so as not to deal with improper integrals. The deformations we considered in this work are thus conformal in a neighborhood of the curve, and in this regard, we are still in the same setup as in the conformal restriction of SLE considered in \cite{LSW_CR_chordal}. 
To understand general quasiconformal deformations of SLE, we need to allow the supports of Beltrami differentials to touch the curve. We think this is an interesting question that may require taking into account the stochastic nature of SLE.  

\newpage
\appendix
\section{Large time behavior of finite-energy Loewner chains}

A priori, a Jordan curve may have finite total capacity. However, we now show that if a Jordan curve has finite Loewner energy, then its capacity must be infinite at both its initial and terminal parts. 

\begin{lem}\label{lem:total-capacity}
    Let $\g:[T_-,T_+]\to\Chat$ be a Jordan curve parametrized by capacity, where $T_- \in [-\infty,0)$, $T_+ \in (0,+\infty]$ and $\g(T_-) = \g(T_+)$ is the root. More precisely, this means that there is a conformal map $H_0: \Chat \smallsetminus \g[T_-,0] \to \m{C} \smallsetminus \m{R}_+$ such that for $a_t$ defined as in Definition~\ref{df:loop_driving}, we have $a_t = t$ for all $t\in [T_-,T_+]$. If
    $ I^L(\g) := \frac{1}{2}\int_{T_-}^{T_+} |\dot \lambda_t|^2\, \dd t< \infty$,  then $T_- = -\infty$ and $T_+ = +\infty$.
\end{lem}
Consequently, the capacity parametrization of a finite-energy Jordan curve must take all of $\m R$ as its domain, which justifies the formula in Definition~\ref{def:loop-driving-fn}.
\begin{proof}
    If the Loewner energy of the chord $\g(0,T_+)$ traversing the domain $\Chat \smallsetminus \g[T_-,0]$ is finite, then its total capacity $T_+$ must be infinite by \cite[Thm.\ 2.4]{sle_ld_survey}. By definition, this chordal Loewner energy is bounded above by the loop Loewner energy $I^L(\g)$.

    Let us now consider $T_-$. By Carath\'eodory's theorem, the conformal map $\sqrt{H_0}$ from $\Chat \smallsetminus \g[T_-,0]$ onto $\m{H}$ extends continuously to a bijection between the prime ends  of $\Chat \smallsetminus\g[T_-,0]$ and $\m{R} \cup \{\infty\}$. Note that $\sqrt{H_0(\g(0))}=0$ and $\sqrt{H_0(\g(T_-))} = \infty$, and for each $t\in (T_-,0)$, one of the prime ends at $\g(t)$ is mapped to a point $x_t \in \m{R}_+$. Let $f_t$ and $\l_t$ be defined as in \eqref{eq:slit-plane-chain} and \eqref{eq:loop-f_t-def}. By Lemma~\ref{lem:loop-time-change}, the Loewner equation $\partial_t f_t(z) = 2/f_t(z) - \dot \lambda_t$ holds for $t\in (T_-,0)$ as well. This implies
    \begin{equation*}
        \frac{\partial}{\partial t} (\Re f_t(z))^2 = \frac{4(\Re f_t(z))^2}{|f_t(z)|^2} - 2\dot \l_t (\Re f_t(z)) .
    \end{equation*} 
    (See \eqref{eq:u_t-and-v_t} below.) For $T\in(T_-,0)$, we deduce from above using Schwarz reflection that 
    \begin{equation}\label{eq:real-axis-loewner}
        \frac{\dd (f_t(x_T))^2}{\dd t} = 4 - 2\dot \lambda_t f_t(x_T), \quad t \in (T,0].
    \end{equation}
    
    Assume, for the sake of contradiction, that $I^L(\g)< +\infty$ and $T_- > -\infty$. Since $f_T(x_T) = 0$, we deduce from \eqref{eq:real-axis-loewner} using the Cauchy--Schwarz inequality that
    \begin{equation*}
    \begin{aligned}
        \sup_{t\in [T,0]}|f_t(x_T)|^2 &\leq 4|T| + 2 \bigg( \int_T^0 \dot |\lambda_t|^2\, \dd t\bigg)^{\frac{1}{2}} \bigg( \int_T^0 |f_t|^2 \, \dd t \bigg)^{\frac{1}{2}} \\
        &\leq 4|T_-| + 2 \sqrt{I^L(\g) |T_-|}  \bigg( \sup_{t\in [T,0]}|f_t(x_T)|^2 \bigg)^{\frac{1}{2}} .
    \end{aligned}
    \end{equation*}
    Completing the square in the above inequality, we obtain
    \begin{equation}\label{eq:negative-capacity-ineq}
        |x_T|^2 \leq \sup_{t\in [T,0]} |f_t(x_T)|^2 \leq  |T_-| \Big(2+\sqrt{I^L(\g)}\Big)^2  . 
    \end{equation}
    By assumption, the right-hand side of \eqref{eq:negative-capacity-ineq} is a finite quantity independent of $T \in (T_-,0)$. This is a contradiction since $t\mapsto x_t$ maps $[T_-,0]$ onto $[0,+\infty]$. 
\end{proof}

The following lemma is used in the proof of Theorem~\ref{thm:intro_energy-derivative-schwarzian} to switch the order between the derivative and the integral in \eqref{eq:energy-limit-2}. 
Note that if $\lambda_t = 0$ for all $t$, then $f_t(z) = \sqrt{z^2 + 4t}$. In particular, $f_t(z)^2/(4t) \to 1$ and $2|t|^{1/2}|f_t'(z)| \to 1$ as $t \to \pm \infty$. 

\begin{lem}\label{lem:appendix}
    Let $\gamma$ be a Jordan curve with $I^L(\gamma) < \infty$. Let $H_t$ and $f_t$ be the uniformizing maps defined in Section~\ref{sec:driving}. Then, as $t\to \pm\infty$, $f_t(z)^2/(4t)\to 1$ and $|f_t'(z)| = |t|^{-1/2 + o(1)}$ locally uniformly for $z \in H_0(\Chat \smallsetminus \gamma)$. 
\end{lem}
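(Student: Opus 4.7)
The plan is to exploit the Loewner ODE $\partial_t f_t(z) = 2/f_t(z) - \dot\lambda_t$, which (as in Remark~\ref{rem:loop-time-change}) holds for all $t\in\m{R}$ with $f_0(z) = z$, together with the equivalence $I^L(\gamma) < \infty \Leftrightarrow \dot\lambda\in L^2(\m{R})$. Setting $F_t(z) := f_t(z)^2$, the ODE yields $\partial_t F_t = 4 - 2\dot\lambda_t f_t$, so for $t\geq 0$,
\[
F_t(z) - z^2 - 4t = -2\int_0^t \dot\lambda_s\, f_s(z)\,\dd s.
\]
A bootstrap using Cauchy--Schwarz and the finite-energy assumption first yields the rough bound $|f_s(z)| \leq C(1 + \sqrt{s})$ locally uniformly in $z$. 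Substituting back, the right-hand side is bounded up to an additive constant by $C\int_0^t \sqrt{s}\,|\dot\lambda_s|\,\dd s$.

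The key technical step is to show $\int_0^t \sqrt{s}\,|\dot\lambda_s|\,\dd s = o(t)$ as $t\to\infty$. Since a direct Cauchy--Schwarz on $[0,t]$ yields only $O(t)$, I split $[0,t] = [0, At]\cup[At, t]$ for $A\in(0,1)$. On $[At, t]$, Cauchy--Schwarz combined with the tail decay $\int_{At}^\infty \dot\lambda_s^2\,\dd s \to 0$ gives a bound of $o(t)$ for each fixed $A$. On $[0, At]$, Cauchy--Schwarz against the global $L^2$ bound yields at most $At\sqrt{I^L(\gamma)}$. Letting $t\to\infty$ first and then $A\to 0$ gives the claim, so $F_t(z)/(4t) \to 1$ locally uniformly.

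For the derivative estimate, differentiating the Loewner ODE in $z$ yields $\partial_t f_t'(z) = -2 f_t'(z)/F_t(z)$, hence $\log f_t'(z) = -\int_0^t 2/F_s(z)\,\dd s$. Since $F_s(z) = 4s(1 + o(1))$ locally uniformly as $s\to\infty$, the integrand satisfies $\mathrm{Re}(2/F_s(z)) = (1+o(1))/(2s)$ on $[1, \infty)$, while the integral over $[0,1]$ is bounded (since $|F_s(z)|$ stays bounded below on compact $z$-subsets of the interior of the domain, as $F_0(z) = z^2$ avoids $0$ there). A standard $\varepsilon$-argument then gives $\mathrm{Re}\int_0^t 2/F_s(z)\,\dd s = (1/2 + o(1))\log t$, so $|f_t'(z)| = t^{-1/2 + o(1)}$. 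The case $t\to-\infty$ is handled by the substitution $\tau = -t$ and $g_\tau(z) := f_{-\tau}(z)$: the chain satisfies $\partial_\tau g_\tau = -2/g_\tau + \dot\lambda_{-\tau}$, and the analogous analysis on $G_\tau := g_\tau^2$ (which obeys $\partial_\tau G_\tau = -4 + 2\dot\lambda_{-\tau}\sqrt{G_\tau}$) yields $G_\tau(z)/(-4\tau) \to 1$, equivalently $f_t(z)^2/(4t) \to 1$, together with the matching derivative bound.

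The main obstacle is the $o(t)$ bound in the second paragraph: crude Cauchy--Schwarz cannot improve on $O(t)$, and the two-scale splitting with the correct order of limits ($t\to\infty$ first, then $A\to 0$) is the essential trick that converts the tail integrability of $\dot\lambda^2$ into the desired sub-linear growth.
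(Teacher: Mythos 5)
Your proof is correct and follows essentially the same route as the paper's: a Cauchy--Schwarz bootstrap giving $\sup_{s\le t}|f_s(z)| = O(\sqrt{t})$ locally uniformly, an upgrade of the error in $f_t(z)^2 - 4t$ to $o(t)$ using the decay of the tail $\int_{T}^{\infty}\dot\lambda_s^2\,\dd s$, and then integration of $\partial_t \log|f_t'(z)| = -\mathrm{Re}(2/f_t(z)^2)$. The only differences are organizational: you work with the complex quantity $F_t = f_t^2$ and split the integral at $At$ before sending $A\to 0$, whereas the paper treats $\mathrm{Re}(f_t^2)=u_t^2-v_t^2$ and $\mathrm{Im}(f_t^2)=2u_tv_t$ separately and splits at a fixed $T_0$ chosen so that $\int_{T_0}^{\infty}\dot\lambda_s^2\,\dd s<\vare$; both devices accomplish the same thing.
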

\begin{proof}
    The starting point of this proof is similar to that of Lemma~\ref{lem:total-capacity}. Let us denote $f_t(z) = u_t + \ii v_t$, where $u_t \in \m{R}$ and $v_t>0$.
    The real and imaginary parts of the Loewner equation $\partial_t f_t(z) = 2/f_t(z) - \dot \lambda_t$ correspond to
    \begin{equation}\label{eq:u_t-and-v_t}
        \dot u_t = \frac{2u_t}{u_t^2 + v_t^2} - \dot \lambda_t \quad \text{and} \quad \dot v_t = -\frac{2v_t}{u_t^2 + v_t^2}.
    \end{equation}
    
    Let us consider the $t\to +\infty$ limit first. Given $\vare>0$, since $\g$ has finite Loewner energy, we can choose a large $T_0$ so that $\int_{T_0}^{+\infty} \dot \lambda_t^2\, \dd t < \vare$. Since
    \begin{equation*}
        \frac{\dd u_t^2}{\dd t} = \frac{4 u_t^2}{u_t^2 + v_t^2} - 2\dot \lambda_t u_t,
    \end{equation*}
    for $T_0\leq t\leq T$, we have
    \begin{equation}\label{eq:u_t^2-ineq}
    \begin{aligned}
        u_t^2 &\leq u_{T_0}^2 + 4\int_{T_0}^T \frac{u_t^2}{u_t^2 + v_t^2}\,\dd t + 2\int_{T_0}^T |\dot \lambda_t u_t|\,\dd t \\
        &\leq u_{T_0}^2 + 4(T-T_0) + 2 \bigg( \int_{T_0}^T \dot \lambda_t^2\, \dd t\bigg)^{1/2} \bigg( \int_{T_0}^T u_t^2 \, \dd t \bigg)^{1/2} .
    \end{aligned}
    \end{equation}
    Hence,
    \begin{equation*}
        \sup_{t\in [T_0,T]} u_t^2 \leq u_{T_0}^2 + 4(T-T_0) + 2\bigg( \vare (T-T_0) \sup_{t\in [T_0,T]} u_t^2 \bigg)^{1/2} .
    \end{equation*}
    Completing the square, we obtain
    \begin{equation*}
        \bigg( \sup_{t\in [T_0,T]} u_t^2 \bigg)^{1/2} \leq \big( u_{T_0}^2 + (4+\vare)(T-T_0) \big)^{1/2} + \big(\vare (T-T_0)\big)^{1/2}.
    \end{equation*}
    Then, there exists a $T_1 \geq T_0$ such that for all $T\geq T_1$, 
    \begin{equation}\label{eq:sup-u_t^2}
        \sup_{t\in [T_0,T]} u_t^2 \leq u_{T_0}^2 + 4(1+\vare) (T-T_0).
    \end{equation}

    Now, from
    \begin{equation*}
       \frac{\partial }{\partial t}\mathrm{Re}(f_t(z)^2) = \frac{\dd (u_t^2 - v_t^2)}{\dd t} = 4 - 2\dot \lambda_t u_t,
    \end{equation*}
    we have
    \begin{equation*}
    \begin{aligned}
        \big|\mathrm{Re} (f_T(z)^2)  - 4T\big| &\leq |\mathrm{Re}(f_{T_0}(z)^2)| + 4T_0 + 2\int_{T_0}^T |\dot\lambda_t u_t|\,\dd t \\
        &\leq |\mathrm{Re}(f_{T_0}(z)^2)| + 4T_0 + 2\bigg( \vare (T-T_0) \sup_{t\in [T_0,T]} u_t^2 \bigg)^{1/2}.
    \end{aligned}
    \end{equation*}
    Substituting \eqref{eq:sup-u_t^2}, we can find a $T_2 \geq T_1$ such that for all $T\geq T_2$,
    \begin{equation*}
        \big|\mathrm{Re} (f_T(z)^2)  - 4T\big| \leq 5\sqrt{\vare} T.
    \end{equation*}
    Since the choice of $\vare$ was arbitrary, we conclude $\mathrm{Re}(f_t(z)^2)/(4t) \to 1$ as $t\to +\infty$. 
    
    As for $\mathrm{Im}(f_t(z)^2) = 2u_tv_t$, note that \eqref{eq:u_t-and-v_t} implies $v_t$ is monotonically decreasing. Hence, \eqref{eq:sup-u_t^2} implies $\mathrm{Im}(f_t(z)^2)/t \to 0$ as $t\to +\infty$. Combining the limits of the real and imaginary parts, we obtain $f_t(z)^2/(4t) \to 1$ as $t\to +\infty$.
    Moreover, since $f_{T_0}(z) = u_{T_0} + \ii v_{T_0}$ depends continuously on $z$ whereas $T_0$ was chosen independently of $z$, the limit we proved converges uniformly on each compact subset of $H_0(\Chat \smallsetminus \gamma)$.

    Let us now consider the $t\to -\infty$ limit. This time, since the Loewner energy of $\g$ is finite, we can find a large negative $\tilde T_0$ such that $\int_{-\infty}^{\tilde T_0} \dot \lambda_t^2 \,\dd t < \vare$. Hence, \eqref{eq:u_t^2-ineq} implies that there exists a $\tilde T_1 \leq \tilde T_0$ such that for all $T \leq \tilde T_1$,
    \begin{equation*}
        \sup_{t\in [T, \tilde T_1]} u_t^2 \leq u_{\tilde T_0}^2 + 4(1+\vare) |T-T_0|.
    \end{equation*}
    Again, we have 
    \begin{equation*}
        \big|\mathrm{Re} (f_T(z)^2)  - 4T\big| \leq |\mathrm{Re}(f_{T_0}(z)^2)| + 4|\tilde T_0| + 2\bigg( \vare |T-\tilde T_0| \sup_{t\in [T,\tilde T_0]} u_t^2 \bigg)^{1/2},
    \end{equation*}
    and choosing $\vare$ to be arbitrarily small, we obtain $\mathrm{Re}(f_t(z)^2)/(4t) \to 1$ as $t\to -\infty$.   

    For the imaginary part of $f_t(z)^2$, we consider 
    \begin{equation*}
        \frac{\partial }{\partial t} \mathrm{Im}(f_t(z)^2) = \frac{\dd (2u_tv_t)}{\dd t} = -2\dot \lambda_t v_t.
    \end{equation*}
    Since $\mathrm{Re}(f_t(z)^2) = u_t^2 - v_t^2$, we have that as $T\to -\infty$, 
    \begin{equation*}
        \sup_{t\in [T,\tilde T_0]} v_t^2 \leq \sup_{t\in [T,\tilde T_0]} |\mathrm{Re}(f_t(z)^2)| + \sup_{t\in [T,\tilde T_0]} u_t^2 \leq (8+o(1)) |T|.
    \end{equation*}
    Hence, using the Cauchy--Schwarz inequality as above, we have
    \begin{equation*}
        \big| \mathrm{Im}(f_T(z)^2) \big| \leq \big|\mathrm{Im} (f_{\tilde T_0}(z)^2)\big| + 2\int_T^{\tilde T_0} |\dot \lambda_t v_t|\,\dd t = o(T)
    \end{equation*}
    as $T\to -\infty$. Therefore, $f_t(z)^2 / (4t) \to 1$ as $t\to -\infty$. Again, this limit converges uniformly on compact subsets of $\m{H}$ since $f_{\tilde T_0}(z)$ depends continuously on $z$ and $\tilde T_0$ can be chosen independently of $z$ on a compact set.

    Finally, to estimate $|f_t'(z)|$, consider the equation
    \begin{equation*}
        \frac{\partial}{\partial t}\log |f_t'(z)| = \mathrm{Re}\bigg( \frac{\partial_t f_t'(z)}{f_t'(z)} \bigg) = -\mathrm{Re}\frac{2}{f_t(z)^2}.
    \end{equation*}
    As $t\to \pm \infty$, the right-hand side behaves as $(-1/2 + o(1))t^{-1}$. We thus obtain $|f_t'(z)| = |t|^{-1/2+o(1)}$ as claimed. 
\end{proof}
\newpage
\bibliographystyle{abbrv}
\bibliography{ref}

\end{document}